\theoremstyle{remark}
\newtheorem{remark}{Remark}
\newcommand{\Comments}{1} %%% SHUT OFF TO SUPPRESS COMMENTS
\newcommandx{\unsure}[2][1=]{\todo[linecolor=red,backgroundcolor=red!25,bordercolor=red,#1]{#2}}
\newcommandx{\change}[2][1=]{\todo[linecolor=blue,backgroundcolor=blue!25,bordercolor=blue,#1]{#2}}
\newcommandx{\info}[2][1=]{\todo[linecolor=OliveGreen,backgroundcolor=OliveGreen!25,bordercolor=OliveGreen,#1]{#2}}
\newcommandx{\improvement}[2][1=]{\todo[linecolor=Plum,backgroundcolor=Plum!25,bordercolor=Plum,#1]{#2}}
\newcommandx{\thiswillnotshow}[2][1=]{\todo[disable,#1]{#2}}
\newcommand{\kyri}[1]{\textcolor{blue}}
\newcommand{\kg}[1]{\textcolor{green}}
\newcommand{\probl}{\mathcal{P}_1}
\newcommand{\btx}{\tilde{\mathbf{x}}}
\newcommand{\bhx}{\hat{\mathbf{x}}}
\newcommand{\Pg}{{P}_{\rm grid}}
\newcommand{\Pch}{{P}_{\rm ch}}
\newcommand{\Pd}{{P}_{\rm dis}}
\newcommand{\argmin}{\text{argmin}}
\newcommand{\ts}{t^*}
\def\R{\mathbb{R}}
\newcommand{\st}[1]{#1^{(t)}}
\newcommand{\Emax}{E_{\rm max}}
\newcommand{\Emin}{E_{\rm min}}
\newcommand{\etac}{\eta_{c}}
\newcommand{\etad}{\eta_{d}}
\newcommand{\fcost}{f_{\rm cost}}
\newcommand{\tPch}{\tilde{P}_{\rm ch}}
\newcommand{\tPdis}{\tilde{P}_{\rm dis}}
\newcommand{\hPch}{\hat{P}_{\rm ch}}
\newcommand{\hPdis}{\hat{P}_{\rm dis}}
\newcommand{\Pmaxc}{P_{\rm ch,max}}
\newcommand{\Pmaxd}{P_{\rm dis,max}}
\newcommand{\tPgrid}{\tilde{P}_{\rm grid}}
\newcommand{\hPgrid}{\hat{P}_{\rm grid}}
\newcommand{\Psol}{P_{\rm sol}}
\newcommand{\Pc}{P_{\rm c}}
\newcommand{\tPc}{\tilde{P}_{\rm c}}
\newcommand{\hPc}{\hat{P}_{\rm c}}
\newcommand{\xvec}{\underline{\mathbf{x}}}
\newcommand{\xopt}{\underline{\mathbf{\tilde{x}}}}
\newcommand{\xhat}{\underline{\mathbf{\hat{x}}}}
\newcommand{\fgridl}{\underaccent{\bar}{f}_{\rm grid}}
\newcommand{\fsoll}{\underaccent{\bar}{f}_{\rm sol}}
\newcommand{\fsolu}{\bar{f}_{\rm sol}}
\newcommand{\fchl}{\underaccent{\bar}{f}_{\rm ch}}
\newcommand{\fchu}{\bar{f}_{\rm ch}}
\newcommand{\fdisl}{\underaccent{\bar}{f}_{\rm dis}}
\newcommand{\fdisu}{\bar{f}_{\rm dis}}
\newcommand{\fsocl}{\underaccent{\bar}{f}_{\rm soc}}
\newcommand{\fsocu}{\bar{f}_{\rm soc}}
\newcommand{\lgridl}{\underaccent{\bar}{\lambda}_{\rm grid}}
\newcommand{\lsoll}{\underaccent{\bar}{\lambda}_{\rm sol}}
\newcommand{\lchl}{\underaccent{\bar}{\lambda}_{\rm ch}}
\newcommand{\ldisl}{\underaccent{\bar}{\lambda}_{\rm dis}}
\newcommand{\lsocl}{\underaccent{\bar}{\lambda}_{\rm soc}}
\newcommand{\lsolu}{\bar{\lambda}_{\rm sol}}
\newcommand{\lchu}{\bar{\lambda}_{\rm ch}}
\newcommand{\ldisu}{\bar{\lambda}_{\rm dis}}
\newcommand{\lsocu}{\bar{\lambda}_{\rm soc}}
\newtheorem{proposition}{Proposition}
\begin{document}
	\title{Control of Energy Storage in Home Energy Management Systems: Non-Simultaneous Charging and Discharging Guarantees}
	
	\author{Kaitlyn~Garifi,~\IEEEmembership{Student Member,~IEEE,}
		Kyri~Baker,~\IEEEmembership{Member,~IEEE,}
		Dane~Christensen,~\IEEEmembership{Member,~IEEE,}
		and~Behrouz~Touri,~\IEEEmembership{Member,~IEEE}% <-this % stops a space
		\thanks{K. Garifi and K. Baker are with the Electrical, Computer, and Energy Engineering Department, University of Colorado Boulder, Boulder,
			CO, 80309 USA (e-mail: \{kaitlyn.garifi;kyri.baker\}@colorado.edu).}% <-this % stops a space
		\thanks{D. Christensen is with the National Renewable Energy Laboratory, Golden, CO, 80401 USA (email: dane.christensen@nrel.gov).}%
		\thanks{B. Touri is with the Electrical and Computer Engineering Department, University of California San Diego, La Jolla, CA, 92093 USA (email: btouri@ucsd.edu).}% <-this % stops a space
		\thanks{Original manuscript received 30 June 2018. This work was supported in parts by the Air Force Office of Scientific Research under the AFOSR-YIP award FA9550-16-1-0400 and the US Department of Energy Building Technologies Office under contract DE-AC36-08GO28308.}}

	\maketitle
	
	% As a general rule, do not put math, special symbols or citations
	% in the abstract or keywords.
	\begin{abstract}
		In this paper we provide non-simultaneous charging and discharging guarantees for a linear energy storage system (ESS) model for a model predictive control (MPC) based home energy management system (HEMS) algorithm. The HEMS optimally controls the residential load and residentially-owned power sources, such as photovoltaic (PV) power generation and energy storage, given residential customer preferences such as energy cost sensitivity and ESS lifetime. Under certain problem formulations with a linear ESS model, simultaneous charging and discharging can be observed as the optimal solution when there is high penetration of PV power. We present analysis for a proposed HEMS optimization formulation that ensures non-simultaneous ESS charging and discharging operation for a linear ESS model that captures both charging and discharging efficiency of the ESS. The energy storage system model behavior guarantees are shown for various electricity pricing schemes such as time of use (TOU) pricing and net metering. Simulation results demonstrating desirable ESS behavior are provided for each electricity pricing scheme.	\end{abstract}
	\vspace{-0.1cm}
	% Note that keywords are not normally used for peerreview papers.
	\begin{IEEEkeywords}
		model predictive control, optimization, energy storage systems, home energy management systems
	\end{IEEEkeywords}	
	\vspace{-0.5cm}
	\IEEEpeerreviewmaketitle
	\section*{Nomenclature}
	\vspace{-0.4cm}
	\begin{table}[h!]
		\centering 
		\begin{tabular}{lll}
			$\alpha$ & Penalty coefficient for ESS charging \\
			$\beta$ & Penalty coefficient for ESS discharging \\
			$c_{e}$		&  Cost of $P_{\rm grid}$ (\$/kWh)\\
			$E$ & ESS state of charge (kWh)\\
			$E_{\rm max}$ & Maximum energy storage in ESS (kWh)\\
			$E_{\rm min}$ & Minimum energy storage in ESS (kWh)\\
			$\eta_c$ & ESS charging efficiency \\
			$\eta_d$ & ESS discharging efficiency \\
			$\Pc$ & Curtailed solar power (kW)\\
			$P_{\rm ch}$ & Power injected into ESS (kW) \\
			$P_{\rm dis}$ & Power drawn from ESS (kW)\\
			$P_{\rm grid}$	&  Power consumed from the grid (kW)\\
			$P_{\rm L,house}$	&  Total residential load (kW)\\
			$\Pmaxc$ & Maximum charging power (kW)\\
			$\Pmaxd$ & Maximum discharging power (kW)\\		
			$P_{\rm sol}$	&  Available solar power (kW)
		\end{tabular}
	\vspace{-0.3cm}
	\end{table}
	\vspace{-0.5cm}
	\section{Introduction} \vspace{-0.2cm}
	\IEEEPARstart{I}{ntegrating} renewable energy into the power grid has led to both generation-side and demand-side solutions to address the intermittent nature of these renewable energy resources. Electric energy storage systems (ESS) are commonly used to cope with the variability in renewable energy resources.~In particular, demand-side residential energy management solutions can be used to address stable renewable energy integration since residential buildings account for over 37.6\% of total electricity consumption in the U.S. \cite{buildingdata}. Home energy management systems (HEMS) provide residential demand-side energy management by coordinating residentially-owned power sources, appliances, user preferences, and renewable energy forecasts \cite{jin2017foresee,wu2015stochastic,zhou2016smart,HUSSAIN2015,Garifi18pes}. Many HEMS systems are equipped with an ESS due to decreasing battery costs and gained flexibility in responding to demand response (DR), and energy cost savings for the customer in various electricity pricing schemes such as time-of-use (TOU) pricing \cite{batteryecon,erdinc2015,Ghazvini2017,KHAN2016,HUSSAIN2015,Garifi18pes}. However, when incorporating ESS models into HEMS, ensuring proper ESS dynamics can be limiting due to the use of lossless or non-convex ESS operation models, or the use of restrictive computation methods. In this work, we provide non-simultaneous battery charging and discharging operation guarantees for a widely-used ESS model that captures both charging and discharging efficiency for use in a model predictive control (MPC) based HEMS algorithm that coordinates power drawn from the grid, available solar power, an ESS, and total residential load.
	
	To address this, we use the Karush-Kuhn-Tucker (KKT) optimality conditions to show that solutions to the convex MPC-based HEMS algorithm where the ESS is simultaneously charging and discharging are suboptimal under certain conditions. Similar analysis on ESS charging and discharging dynamics derived from the KKT conditions has been applied to a multi-period optimal power flow (OPF) problem with ESS assets \cite{kkt_opf2013}. For the OPF problem, the authors in \cite{kkt_opf2013} use the natural relation between locational marginal prices (LMPs) and the KKT conditions, to determine conditions for non-simultaneous ESS charging and discharging dynamics. We apply similar analysis to give guarantees on proper ESS dynamics in an MPC-based HEMS optimization framework for certain electricity pricing schedules and in the presence or absence of net metering. The penalty approach for discouraging simultaneous charging and discharging in the same ESS model is used in \cite{Zarrilli2018}; however, we provide a formal proof for the conditions under which this approach works. Additionally, the authors in~\cite{7792609} provide analysis on simultaneous charging and discharging using the same ESS model for a distributed power system with multiple grid-connected storage systems. 
	
	% needed in second column of first page if using \IEEEpubid
	%\IEEEpubidadjcol
	
	The outline of this paper is as follows: in Section~\ref{sec:relatedwork}, we survey various ESS models used in HEMS literature. In Section~\ref{sec:problemform}, the mathematical HEMS models and the overall MPC-based HEMS optimization problem are introduced. In Section~\ref{sec:proofs}, we present the main theoretical results of this paper showing that simultaneous ESS charging and discharging is suboptimal for various scenarios. In Section~\ref{sec:sims}, we provide simulation results highlighting proper ESS behavior for various electricity pricing schemes and net metering considered in this paper. We also provide simulation results showing when simultaneous ESS charging and discharging is an optimal solution. In Section~\ref{sec:conclusion}, conclusions regarding the use of the proposed ESS model are discussed, as well as potential areas of future work. \vspace{-0.25cm}
	\section{Commonly Used ESS Models}\label{sec:relatedwork}
	Energy storage systems are often included in renewable energy research due to their energy management flexibility \cite{hemmati2017technical}. Additionally, an ESS can be used to account for uncertainties in renewable energy such as solar and wind energy \cite{baker2017optEss,hemmati2017technical}. In HEMS research, many homes are equipped with an ESS due to its flexibility to participate in grid services such as demand response \cite{jin2017foresee,Garifi18pes,HUSSAIN2015} or help with consumer energy cost savings in real-time or demand pricing energy markets \cite{wu2015stochastic,batteryecon,zhou2016smart}. However, ensuring proper ESS dynamics (i.e. ensuring the model does not allow a physically unrealizable optimal control policy where the ESS simultaneously charges and discharges in the same time step) when incorporating ESS models into the HEMS framework leads to the use of lossless, binary, or non-convex models. 
	
	To ensure non-simultaneous ESS charging and discharging, a lossless ESS model is often used. A common lossless model used in HEMS literature is:
	\begin{subequations}
		\begin{align}
		E^{(t+1)}&=E^{(t)}+\Delta t P_{\rm ess}^{(t)}, \label{eqn:batt_lossless_begin}\\
		E_{\rm min}&\leq E^{(t+1)} \leq E_{\rm max},\\
		-P_{\rm max}& \leq P_{\rm ess}^{(t)} \leq P_{\rm max}, \label{eqn:batt_lossless_end}
		\end{align}
	\end{subequations}
	for all time $t\in\{1,\dots,N\}$ where $P_{\rm ess}^{(t)}$ is the power injected or drawn into the ESS \cite{yu2017online,CDC2016}. The model in~\eqref{eqn:batt_lossless_begin}-\eqref{eqn:batt_lossless_end} ensures non-simultaneous ESS charging and discharging since the power injected into the ESS and drawn from the ESS are captured in one variable $P_{\rm ess}^{(t)}$ which represents discharging when negative and charging when positive. While this model is linear, it assumes perfect ESS charging and discharging efficiency, implying the roundtrip efficiency of the ESS is 100\% \cite{yu2017online,CASTILLO2014885}, which does not accurately model the non-negligible losses of the system.
	
	Additionally, non-convex ESS models are used in HEMS research to capture ESS charging and discharging efficiency. A common non-convex ESS model that includes both charging and discharging terms and ensures the ESS does not simultaneously charge and discharge uses binary variables and is given by \cite{hemmati2017technical,chen2013mpc,2015robustOpt_wBatt_wRERs,wu2015stochastic,erdinc2015,Ghazvini2017,ParisioEssMILP}:
	\begin{subequations}
		\begin{align}
		E^{(t+1)}&=E^{(t)}+\Delta t(\eta_cP_{\rm ch}^{(t)}b^{(t)}-\eta_dP_{\rm dis}^{(t)}(1-b^{(t)})),\\
		E_{\rm min}&\leq E^{(t+1)} \leq E_{\rm max},\\
		0&\leq P_{\rm ch}^{(t)}\leq P_{\rm ch,max},\\
		0&\leq P_{\rm dis}^{(t)} \leq \Pmaxd,\\
		b^{(t)}&\in\{0,1\},
		\end{align}
	\end{subequations}
	for all time $t\in\{1,\dots,N\}$. The use of separate terms for ESS charging and discharging, $P_{\rm ch}^{(t)}$ and $P_{\rm dis}^{(t)}$, respectively, allow for a roundtrip efficiency of less than 100\% which accounts for losses in ESS-to-grid interactions \cite{CASTILLO2014885,2015robustOpt_wBatt_wRERs}. However, the use of binary variables in ESS models results in a non-convex model requiring the use of computationally restrictive methods such as mixed integer (non-)linear programming (MILP/MINLP) \cite{hemmati2017technical,chen2013mpc,2015robustOpt_wBatt_wRERs,wu2015stochastic}. Alternatively, the following nonlinear, non-convex ESS model without binary variables can be used:
	\begin{subequations}\label{eqn:essmdl_comp}
		\begin{align}
		E^{(t+1)}&=E^{(t)}+\Delta t\left(\eta_cP_{\rm ch}^{(t)}-\eta_dP_{\rm dis}^{(t)}\right), \label{eqn:ess_pchpdis0_begin}\\
		E_{\rm min}&\leq E^{(t+1)} \leq E_{\rm max},\\
		0&\leq P_{\rm ch}^{(t)}\leq P_{\rm ch,max},,\\
		0&\leq P_{\rm dis}^{(t)} \leq \Pmaxd,\\
		P_{\rm ch}^{(t)}&\cdot P_{\rm dis}^{(t)}=0, \label{eqn:pchpdiseq0}
		\end{align}
	\end{subequations} 
for all time $t\in\{1,\dots,N\}$ where the non-convex constraint in~\eqref{eqn:pchpdiseq0} is included in the model to ensure non-simultaneous ESS charging and discharging. Similar to the ESS model with binary variables, the model in~\eqref{eqn:ess_pchpdis0_begin}-\eqref{eqn:pchpdiseq0} also requires the use of computationally restrictive non-convex optimization solvers.
	
	Thus, the common models used to ensure non-simultaneous ESS charging and discharging either assume perfect 100\% roundtrip efficiency or are non-convex requiring the use of computationally limiting numerical methods. In this work, we provide non-simultaneous ESS charging and discharging guarantees for a frequently used linear ESS model that captures both charging and discharging efficiency. The main contributions of this work are the following:
	\begin{itemize}
		\item We present a formalized and thorough analysis that shows simultaneous charging and discharging is \textit{suboptimal} for a frequently used linear ESS model that captures both charging and discharging efficiency under the proposed HEMS optimization formulation, including under various pricing schemes and net metering. We believe this to be the first manuscript that formally proves these claims.
		\item The use of non-convex or mixed integer ESS models to ensure non-simultaneous charging and discharging are \textit{unnecessary} under the proposed HEMS optimization formulation. 
		\item We present situations under which simultaneous charging and discharging \textit{is} optimal in the HEMS optimization, and provide a means to ensure an equivalent solution where simultaneous charging and discharging does \textit{not} occur without adopting a non-convex ESS model.
	\end{itemize}
	\begin{figure}[t!]
		\centering
		\includegraphics[width=2.5in]{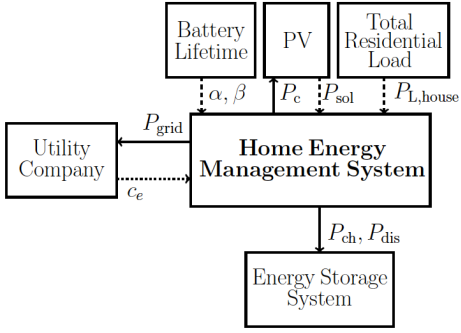} \vspace{-0.3cm}
		\caption{Overall HEMS control schematic.} \vspace{-0.3cm}
		\label{fig:hems_fig}
	\end{figure} 

	\section{Problem Formulation} \label{sec:problemform} \vspace{-0.1cm}
	In this section, we provide the mathematical models for the HEMS and the overall MPC optimization problem. In this work, we assume that the HEMS must coordinate the residential PV generation, the ESS, the total residential load, and power drawn from the grid needed to satisfy the load, as shown in Fig.~\ref{fig:hems_fig}.
	
	The residential energy storage system state of charge (SOC) and power charged/discharged are modeled by:
	\begin{subequations}
		\begin{align}
		E^{(t+1)}&=E^{(t)} + \eta_c \Delta tP_{\rm ch}^{(t)} -\eta_d\Delta tP_{\rm dis}^{(t)}, \label{eqn:batt_begin}\\
		E_{\rm min} &\leq E^{(t+1)} \leq E_{\rm max}, \\ 
		0&\leq P_{\rm ch}^{(t)}\leq P_{\rm ch,max},\\
		0&\leq P_{\rm dis}^{(t)} \leq \Pmaxd,\\
		0&<\eta_c<1<\eta_d, \label{eqn:batt_end}
		\end{align}
	\end{subequations}
	for all $t\in\{1,\dots,N\}$ where $E_{\rm min}$, $E_{\rm max}$ define the SOC limits. The charging efficiency is denoted $\eta_c$ and the discharging efficiency is denoted $\eta_{d}$. The condition in~\eqref{eqn:batt_end} can be relaxed and the results we present in this paper hold for any $\eta_c$ and $\eta_d$ satisfying $0<\eta_c<\eta_d$; however, we constrain $\eta_c$ and $\eta_d$ as in~\eqref{eqn:batt_end} for a more practical roundtrip ESS-to-grid efficiency. While there exist more complex models that capture nonlinear ESS dynamics, we specifically consider this linear ESS model since it is widely used for grid-connected ESS applications \cite{jin2017foresee,Zarrilli2018,kkt_opf2013,7792609,baker2017optEss}.
	
	{The available solar power generated from PV, denoted $P_{\rm sol}^{(t)}$, is a function of the solar irradiance, area of the PV array, and array and inverter efficiency. The curtailed solar power is denoted $\Pc^{(t)}$. The overall power consumption from building loads is denoted $P_{\rm L,house}^{(t)}\geq0$. The power balance is given by:
		\begin{align}
		0=-P_{\rm grid}^{(t)}+P_{\rm L,house}^{(t)}-(P_{\rm sol}^{(t)}-\Pc^{(t)})-P_{\rm dis}^{(t)}+P_{\rm ch}^{(t)}.
		\end{align}
		
		Next, we provide the overall MPC-based optimization problem for the HEMS. The objective function $f_{\rm cost}(\underline{\mathbf{x}}_{N})$, which captures both customer electricity price sensitivity and ESS lifetime considerations, is given by:
		\begin{comment}
		\begin{align}
		f_{\rm cost}(\underline{\mathbf{x}}_{N}) = \sum_{t=1}^{N}f(t,\Pg^{(t)},\Pch^{(t)},\Pd^{(t)}), \label{eqn:fcost_tou2}
		\end{align}
		where $f(t,\cdot,\cdot,\cdot)$ is function from $\R^3\to \R^+$, satisfying the following assumptions:
		\begin{enumerate}[a.]
		\item $f(t,\cdot,\cdot,\cdot)$ is radially unbounded, i.e.\ $\lim_{\|x\|\to\infty}f(t,x_1,x_2,x_3)=\infty$ for all $t\in \{1,\ldots,N\}$. 
		\item $f(t,\cdot,\cdot,\cdot)$ is a non-increasing function on each of its arguments. 
		\item $f(t,\cdot,\cdot,\cdot)$
		\end{enumerate}
		\end{comment}
		\begin{align}
		f_{\rm cost}(\underline{\mathbf{x}}_{N}) = \sum_{t=1}^{N}\left(c_e^{(t)}\Pg^{(t)}+\alpha\Pch^{(t)}+\beta\Pd^{(t)}\right), \label{eqn:fcost_tou}
		\end{align}
		where $N$ denotes the prediction horizon, $c_e^{(t)}>0$ for all $t\in\{1,2,\dots,N\}$ {reflects the unit cost of energy from the grid, and $\alpha,\beta\geq0$ reflect the lifetime cost of charging and discharging the battery, respectively}. Other ESS lifetime considerations are presented in \cite{batt_life,Zarrilli2018}. We allow the electricity drawn from the grid to be subject to a time-varying pricing schedule $c_e^{(t)}$, to include situations such as TOU pricing. The control variables at each time $t$ are collected in the vector $\underline{\mathbf{x}}_N=[	\mathbf{x}^{(1)} \; \mathbf{x}^{(2)} \; \dots \;\mathbf{x}^{(N)}]^{\rm T}$ where $\mathbf{x}^{(t)}~=~[
		P_{\rm grid}^{(t)} \; P_{\rm ch}^{(t)}\;P_{\rm dis}^{(t)}\;\Pc^{(t)}]^{\rm T}$.} Based on these discussions, the overall linear optimization program is:
	\begin{subequations}
		\begin{align} 
		(\mathcal{P}_1)\quad&\underset{\underline{\mathbf{x}}_{N}}{\min} \qquad  f_{\rm cost}(\underline{\mathbf{x}}_{N}) \label{eqn:opt_begin}\\
		& \text{subject to} \nonumber\\
		0 &\leq P_{\rm grid}^{(t)}, \label{eqn:opt_pgrid}\\
		0& \leq P_{\rm ch}^{(t)} \leq P_{\rm ch,max},  \label{eqn:opt_pch}\\
		0& \leq P_{\rm dis}^{(t)} \leq P_{\rm dis,max},  \label{eqn:opt_pdis}\\
		E^{(t+1)}&= E^{(t)} + \eta_c \Delta tP_{\rm ch}^{(t)} -\eta_d\Delta t P_{\rm dis}^{(t)},  \label{eqn:opt_battSOC}\\ 
		E_{\rm min}& \leq E^{(t+ 1)}\leq E_{\rm max},  \label{eqn:opt_battSOClims}\\
		0&\leq \Pc^{(t)}\leq \Psol^{(t)}, \label{eqn:opt_curtlims}\\
		0&=-P_{\rm grid}^{(t)}+P_{\rm L,house}^{(t)}-(P_{\rm sol}^{(t)}-\Pc^{(t)})-P_{\rm dis}^{(t)}+P_{\rm ch}^{(t)}, \label{eqn:opt_end}
		\end{align} 
	\end{subequations}
	for all $t=\{1,\dots,N\}$. The MPC-based HEMS optimization problem given in~\eqref{eqn:opt_begin}-\eqref{eqn:opt_end}, denoted ($\mathcal{P}_1$), will be the basis of the analysis presented in Section~\ref{sec:proofs} for the linear ESS model being considered. We will omit the constraint in~\eqref{eqn:opt_pgrid} when addressing customer price sensitivity under net metering. Note that the above optimization problem is always feasible as the (traditional operating) point $\xvec_N$ given by $\Pd^{(t)}=\Pch^{(t)}=0$, $\Pc^{(t)}=\Psol^{(t)}$, and $\Pg^{(t)}=P_{\rm L,house}^{(t)}$ for all $t$ is a feasible point for this optimization problem. 
	\vspace{-0.3cm}
	\section{ESS Model Behavior} \label{sec:proofs}
	Next, we derive the KKT conditions for the optimization problem given in ($\mathcal{P}_1$) and we prove that simultaneous charging and discharging in the ESS model \eqref{eqn:batt_begin}-\eqref{eqn:batt_end} is suboptimal.
	\subsection{KKT Conditions}
	The Karush-Kuhn-Tucker (KKT) conditions provide necessary and, in some general settings, sufficient conditions which must be satisfied for the solution of a broad range of optimization problems to be optimal. These conditions will be leveraged to show, under various system conditions and pricing schemes, that simultaneous charging and discharging of the ESS model is suboptimal (i.e.,  solutions of this form violate the KKT conditions).
	\begin{comment}
	The constraints in \eqref{eqn:sd_pchLB}-\eqref{eqn:sd_pdisUB} are obtained by decomposing the constraints in \eqref{eqn:opt_pch}-\eqref{eqn:opt_pdis} into the upper and lower bounds on $P_{\rm ch}^{(t)}$ and $P_{\rm dis}^{(t)}$. The ESS constraints in \eqref{eqn:soc_LB}-\eqref{eqn:soc_UB} are obtained by plugging the ESS dynamics constraint in \eqref{eqn:opt_battSOC} into the ESS operating bounds constraint in \eqref{eqn:opt_battSOClims} and solving the ESS dynamics model in \eqref{eqn:opt_battSOC} at the current time step $(t)$ in terms of the previous time step $(t-1)$ The curtailment constraints in~\eqref{eqn:sd_pcLB}-\eqref{eqn:sd_pcUB} are obtained by decomposing the constraint in~\eqref{eqn:opt_curtlims} into the upper and lower bound on $\Pc^{(t)}$. 
	\end{comment}
	
	To aid in the presentation of the KKT conditions, we introduce additional notation for describing the inequality and equality constraints in the optimization problem given in ($\mathcal{P}_1$). Notice that each of the inequality constraints in \eqref{eqn:opt_pgrid}-\eqref{eqn:opt_curtlims} describes $N$ inequalities (one for each time $t\in\{1,\dots,N\}$) for each of the upper and lower bounds on each of the control variables and the ESS state of charge. Writing ($\mathcal{P}_1$) in standard form \cite{Boyd2004}, the functions on the left-hand side of the inequality constraints are denoted:
	\begin{align*}
	&\fgridl^{t}(\underline{\mathbf{x}}_{N})=-P_{\rm grid}^{(t)},\\
	&\fchl^{t}(\underline{\mathbf{x}}_{N})=-P_{\rm ch}^{(t)},\\
	&\fchu^{t}(\underline{\mathbf{x}}_{N})=P_{\rm ch}^{(t)}-P_{\rm ch,max},\\
	&\fdisl^{t}(\underline{\mathbf{x}}_{N})=-P_{\rm dis}^{(t)},\\
	&\fdisu^{t}(\underline{\mathbf{x}}_{N})=P_{\rm dis}^{(t)}-P_{\rm dis,max},\\
	&\fsocl^{t}(\underline{\mathbf{x}}_{N})=E_{\rm min}- E^{0} + \Delta t \sum_{n=1}^{t}\left(\eta_d P_{\rm dis}^{(n)}-\eta_c P_{\rm ch}^{(n)}\right),\\
	&\fsocu^{t}(\underline{\mathbf{x}}_{N})=E^{0} + \Delta t \sum_{n=1}^{t}\left(\eta_c P_{\rm ch}^{(n)} -\eta_d P_{\rm dis}^{(n)}\right)-E_{\rm max},\\
	&\fsoll^{t}(\underline{\mathbf{x}}_{N})=-\Pc^{(t)},\\
	&\fsolu^{t}(\underline{\mathbf{x}}_{N})=\Pc^{(t)}-\Psol^{(t)},
	\end{align*}
	for all $t\in\{1,\dots,N\}$ where $E^{0}$ is the initial ESS SOC. Similarly, the equality constraint in \eqref{eqn:opt_end} describes $N$ equality constraints. We denote the $N$ expressions on the right-hand-side of the equality constraint \eqref{eqn:opt_end} by the function $h^{t}(\underline{\mathbf{x}}_{N})$ for $t\in\{1,2,\dots,N\}$.
	
	Next, we derive the KKT conditions for the optimization problem given in ($\mathcal{P}_1$). Let $\underline{\mathbf{\tilde{x}}}_{N}$ denote an optimal solution for the problem. Then the KKT conditions are:
	\begin{subequations}
		\begin{align}
		&\text{Primal Feasibility:}\nonumber \\
		&\fgridl^{t}(\underline{\mathbf{\tilde{x}}}_{N}) \leq 0, \label{eqn:K1a_g}\\
		&\fchl^{t}(\underline{\mathbf{\tilde{x}}}_{N}),\fdisl^{t}(\underline{\mathbf{\tilde{x}}}_{N}),\fsocl^{t}(\underline{\mathbf{\tilde{x}}}_{N})\leq 0,  \label{eqn:K1a_battl}\\
		&\fchu^{t}(\underline{\mathbf{\tilde{x}}}_{N}),\fdisu^{t}(\underline{\mathbf{\tilde{x}}}_{N}),\fsocu^{t}(\underline{\mathbf{\tilde{x}}}_{N}) \leq 0,  \label{eqn:K1a_battu}\\
		&\fsoll^{t}(\underline{\mathbf{\tilde{x}}}_{N}),\fsolu^{t}(\underline{\mathbf{\tilde{x}}}_{N}) \leq 0,  \label{eqn:K1a_sol}\\
		&h^{t}(\underline{\mathbf{\tilde{x}}}_{N}) = 0, \label{eqn:K1b}\\
		&\text{Dual Feasibility:}\nonumber\\
		&\lgridl^{t}\geq 0,  \label{eqn:K2_g} \\
		& \lchl^t,\lchu^t,\ldisl^t,\ldisu^t,\lsocl^t,\lsocu^t \geq 0, \label{eqn:K2_batt} \\
		&\lsoll^t,\lsolu^t \geq 0, \label{eqn:K2_sol} \\
		&\text{Complementary Slackness:}\nonumber \\
		&\lgridl^t\fgridl^t(\underline{\mathbf{\tilde{x}}}_{N}) = 0, \label{eqn:K3_g}\\	
		&\lchl^t\fchl^t(\underline{\mathbf{\tilde{x}}}_{N})=\ldisl^t\fdisl^t(\underline{\mathbf{\tilde{x}}}_{N})=\lsocl^t\fsocl^t(\underline{\mathbf{\tilde{x}}}_{N}) = 0, \label{eqn:K3_battl}\\	
		&\lchu^t\fchu^t(\underline{\mathbf{\tilde{x}}}_{N})=\ldisu^t\fdisu^t(\underline{\mathbf{\tilde{x}}}_{N})=\lsocu^t\fsocu^t(\underline{\mathbf{\tilde{x}}}_{N}) = 0, \label{eqn:K3_battu}\\
		&\lsoll^t\fsoll^t(\underline{\mathbf{\tilde{x}}}_{N})=\lsolu^t\fsolu^t(\underline{\mathbf{\tilde{x}}}_{N}) = 0, \label{eqn:K3_sol}\\
		&\text{Stationarity:}\nonumber \\	
		&\nabla f_{\rm cost}(\underline{\mathbf{\tilde{x}}}_{N}) + \sum_{t=1}^{N}\bigg(\lgridl^t\nabla \fgridl^t(\underline{\mathbf{\tilde{x}}}_{N})+\lchl^t\nabla \fchl^t(\underline{\mathbf{\tilde{x}}}_{N})+\nonumber\\
		&\qquad \lchu^t\nabla \fchu^t(\underline{\mathbf{\tilde{x}}}_{N})+\ldisl^t\nabla \fdisl^t(\underline{\mathbf{\tilde{x}}}_{N})+\ldisu^t\nabla \fdisu^t(\underline{\mathbf{\tilde{x}}}_{N}) + \nonumber \\
		& \qquad \lsocl^t\nabla\fsocl^t(\underline{\mathbf{\tilde{x}}}_{N})+\lsocu^t\nabla\fsocu^t(\underline{\mathbf{\tilde{x}}}_{N})+\lsoll^t\nabla\fsoll^t(\underline{\mathbf{\tilde{x}}}_{N}) + \nonumber\\
		& \qquad\lsolu^t\nabla\fsolu^t(\underline{\mathbf{\tilde{x}}}_{N}) +\mu_{t}\nabla h^t(\underline{\mathbf{\tilde{x}}}_{N})\bigg) = \mathbf{0},  \label{eqn:K4}
		\end{align}
	\end{subequations}
	where $\mathbf{0}\in \mathbb{R}^{4N}$, and $\underaccent{\bar}{\lambda}_{\rm const}^{t}/\bar{\lambda}_{\rm const}^{t}$ is the Lagrange multiplier associated with the inequality constraint $\underaccent{\bar}{f}_{\rm const}^{t}(\xopt_H)/\bar{f}_{\rm const}^{t}(\xopt_H)$, respectively, and $\mu_t$ is the Lagrange multiplier associated with the power balance equality constraint $h^t(\xopt_H)$ at time $t$ \cite{Boyd2004,Bertsekas2003}. Then, the condition in \eqref{eqn:K4} is decomposed into:
	\begin{subequations}
		\begin{align}
		&c_e^{(t)} - \lgridl^t - \mu_{t}=0, \label{eqn:dL1}\\
		&\alpha-\lchl^{t} + \lchu^{t} + \eta_c\Delta t\sum_{n=t}^{N}(\lsocu^{n}-\lsocl^n)+\mu_{t}=0,\label{eqn:dL2}\\
		&\beta-\ldisl^{t} + \ldisu^t + \eta_d\Delta t\sum_{n=t}^{N}(\lsocl^n-\lsocu^n)-\mu_{t}=0, \label{eqn:dL3}\\
		&-\lsoll^t+\lsolu^t+\mu_{t}=0, \label{eqn:dL4}
		\end{align}
	\end{subequations}
	for all $t\in\{1,\dots, N\}$. For convex optimization problems with differentiable objective functions, any solution that satisfies the KKT conditions given in \eqref{eqn:K1a_g}-\eqref{eqn:K4} is optimal \cite{Boyd2004}. Thus, we leverage the KKT conditions to show that feasible points to the MPC-based HEMS optimization problem given in ($\mathcal{P}_1$) where the ESS is simultaneously charging and discharging are suboptimal in certain situations. \vspace{-0.3cm}
	\subsection{Convex ESS Model Behavior Guarantees}
	Next, we show that feasible points where the ESS is charging and discharging simultaneously ($0<\Pch^{(t)}\leq\Pmaxc$, $0<\Pd^{(t)}\leq \Pmaxd$) give rise to suboptimal solutions. Further, we show that a solution with simultaneous charging and discharging is suboptimal for an objective function that captures customer energy cost sensitivity and ESS lifetime considerations. The customer energy cost sensitivity is studied for two cases: 1) the case where power is not allowed to be exported back to the grid, and 2) the net metering case. Under net metering, a consumer can provide a grid service by exporting excess available solar power to offset the price of power drawn from the grid, thus the constraint in~\eqref{eqn:opt_pgrid} is omitted in ($\mathcal{P}_1$). This implies $\tilde{P}_{\rm grid}^{(t)}$ can be negative, which represents the customer's ability to export excess solar power to the grid. In each case, we assume that the optimal solution for some time $\tau\in\{1,\ldots,N\}$ in the prediction horizon is $\tilde{\mathbf{x}}^{(\tau)}~=[
	\tilde{P}_{\rm grid}^{(\tau)}\;\tilde{P}_{\rm ch}^{(\tau)}\;\tilde{P}_{\rm dis}^{(\tau)}\;\tPc^{(\tau)}
	]^{\rm T}$, where $0<\tilde{P}_{\rm ch}^{(\tau)}\leq\Pmaxc$ and $0<\tilde{P}_{\rm dis}^{(\tau)}\leq P_{\rm dis,max}$, from which we obtain a contradiction to show that a solution with simultaneous ESS charging and discharging is suboptimal. In Proposition~\ref{thm:ce_tou_sub}, we show that a solution to ($\mathcal{P}_1$) with simultaneous ESS charging and discharging is suboptimal when customers are not able to export excess power onto the grid.
		\begin{proposition} \label{thm:ce_tou_sub}
			Assume $\alpha,\beta\geq 0$, $(\alpha+\beta)>0$, $0<\eta_c<1<\eta_d$, and $c_e^{(t)}>0$ for all $t$. A solution to ($\mathcal{P}_1$) where the ESS simultaneously charges and discharges, i.e. $0<\tilde{P}_{\rm ch}^{(t)}\leq \Pmaxc$ and $0<\tilde{P}_{\rm dis}^{(t)}\leq \Pmaxd$ for any $t~\in~\{1,\dots,N\}$, is suboptimal if customers are not able to export excess power onto the grid, i.e. $\Pg^{(t)}\geq0$.
	\end{proposition}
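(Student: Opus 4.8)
The plan is to argue by contradiction using the KKT conditions, which (as noted in the excerpt) are necessary and sufficient for $(\probl)$ since it is a convex, indeed linear, program with a differentiable objective. Suppose an optimal solution $\xopt_{N}$ exhibits simultaneous charging and discharging at some time $\tau$, i.e. $0<\tPch^{(\tau)}\leq\Pmaxc$ and $0<\tPdis^{(\tau)}\leq\Pmaxd$. First I would use complementary slackness: because $\tPch^{(\tau)}>0$ and $\tPdis^{(\tau)}>0$, the lower-bound constraint functions $\fchl^{\tau}$ and $\fdisl^{\tau}$ are strictly negative, so \eqref{eqn:K3_battl} forces $\lchl^{\tau}=\ldisl^{\tau}=0$, removing those multipliers from the stationarity identities \eqref{eqn:dL2} and \eqref{eqn:dL3} at $t=\tau$.

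Next I would extract a sign on the power-balance multiplier $\mu_\tau$. With $\lchl^{\tau}=\ldisl^{\tau}=0$, the combination $\etad\times$\eqref{eqn:dL2}$+\etac\times$\eqref{eqn:dL3} cancels the shared sum $\Delta t\sum_{n=\tau}^{N}(\lsocu^{n}-\lsocl^{n})$ and yields $(\etad-\etac)\mu_\tau=-(\etad\alpha+\etac\beta+\etad\lchu^{\tau}+\etac\ldisu^{\tau})$. Every term on the right is nonnegative, $\etad\alpha+\etac\beta>0$ because $\alpha+\beta>0$ with $\etac,\etad>0$, and $\etad-\etac>0$ since $\etac<\etad$; hence $\mu_\tau<0$. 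Feeding this into the grid identity \eqref{eqn:dL1}, $\mu_\tau=c_e^{(\tau)}-\lgridl^{\tau}$, gives $\lgridl^{\tau}=c_e^{(\tau)}-\mu_\tau>0$, so \eqref{eqn:K3_g} forces $\tPgrid^{(\tau)}=0$; likewise the curtailment identity \eqref{eqn:dL4} gives $\lsolu^{\tau}-\lsoll^{\tau}=-\mu_\tau>0$, so $\lsolu^{\tau}>0$ and \eqref{eqn:K3_sol} forces $\tPc^{(\tau)}=\Psol^{(\tau)}$, i.e. all available solar is curtailed. This is precisely where the no-export hypothesis $\Pg^{(t)}\geq0$ enters: it is what makes $\lgridl^{\tau}$ available, allowing $\mu_\tau<0$ to pin the grid variable to its lower bound.

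To finish, I would exhibit a feasible, strictly cost-improving perturbation of $\xopt_{N}$, contradicting optimality. The natural move is to reduce both $\tPch^{(\tau)}$ and $\tPdis^{(\tau)}$ by a common $\delta\in(0,\min(\tPch^{(\tau)},\tPdis^{(\tau)})]$: this leaves the net bus injection $\tPdis^{(\tau)}-\tPch^{(\tau)}$ unchanged, so the balance \eqref{eqn:opt_end} still holds with $\tPgrid^{(\tau)}$, $\tPc^{(\tau)}$ and all other variables fixed, while the objective drops by $(\alpha+\beta)\delta>0$ (again using $\alpha+\beta>0$). The only feasibility concern is the state of charge: since $\etac<\etad$, the perturbation raises $E^{(t+1)}$ by $(\etad-\etac)\Delta t\,\delta$ for every $t\geq\tau$, so it stays feasible exactly when the upper bound $\Emax$ has slack over the remaining horizon. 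I expect the main obstacle to be the degenerate case in which $E^{(n+1)}=\Emax$ for some $n\geq\tau+1$ while also $\tPgrid^{(\tau)}=0$ and $\tPc^{(\tau)}=\Psol^{(\tau)}$: there the SOC-raising perturbation is blocked, and the complementary SOC-preserving one (reducing in the ratio $\etad\delta_d=\etac\delta_c$) would force $\tPgrid^{(\tau)}$ below zero or curtailment above $\Psol^{(\tau)}$, both infeasible. To resolve it I would exploit the structure already forced above, namely that the load is met purely by net discharge, $\tPdis^{(\tau)}-\tPch^{(\tau)}=P_{\rm L,house}^{(\tau)}$, so the SOC strictly decreases across step $\tau$; this means the trajectory must recharge before the binding time $n$, and I would pair the reduction at $\tau$ with an offsetting reduction at that charging step, or equivalently use the relation $(\etad-\etac)\Delta t\sum_{n=\tau}^{N}(\lsocu^{n}-\lsocl^{n})=\alpha+\beta+\lchu^{\tau}+\ldisu^{\tau}>0$ to locate the step where $\lsocu^{n}>0$ and derive the contradiction there.
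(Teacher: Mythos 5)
Your proposal is correct and shares the paper's two-stage skeleton---a KKT/dual argument that eliminates most configurations, followed by an explicit primal perturbation for the one configuration the dual argument cannot rule out---but your KKT stage uses a cleaner decomposition. The paper argues case by case on the primal values (first $\tPgrid^{(\tau)}>0$, then $\tPgrid^{(\tau)}=0$ with $\tPc^{(\tau)}<\Psol^{(\tau)}$), deriving the same contradiction \eqref{eqn:c1_cont} in each case, and only then confronts the remaining configuration $\tPgrid^{(\tau)}=0$, $\tPc^{(\tau)}=\Psol^{(\tau)}$ with a primal construction. You instead run the dual computation once: your identity $(\etad-\etac)\mu_\tau=-(\etad\alpha+\etac\beta+\etad\lchu^{\tau}+\etac\ldisu^{\tau})$ is algebraically correct, forces $\mu_\tau<0$, and then \eqref{eqn:dL1} and \eqref{eqn:dL4} with complementary slackness pin $\tPgrid^{(\tau)}=0$ and $\tPc^{(\tau)}=\Psol^{(\tau)}$; this subsumes the paper's two contradiction cases in one stroke and makes explicit what any simultaneous-charge/discharge optimum must look like, which is arguably clearer than deriving the same infeasible dual equation twice. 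Your degenerate-case plan is the paper's construction in sketch form: the paper likewise reduces charging and discharging at $\tau$ by a common amount $p$ and, to respect a later binding SOC ceiling, offsets with a charging reduction of $(\etad/\etac-1)p$ at the first charging time $\ts>\tau$, rebalancing the bus at $\ts$ through reduced grid import ($a$) and increased curtailment ($b$). One detail your sketch glosses over that the paper needs: $\tau$ must be taken, WLOG, as the \emph{last} simultaneous charge/discharge time, so that $\tPdis^{(\ts)}=0$ and the power balance at $\ts$ yields $\tPgrid^{(\ts)}+(\Psol^{(\ts)}-\tPc^{(\ts)})\geq\tPch^{(\ts)}$, guaranteeing room for the rebalancing; without this, heavy discharging at $\ts$ could block the offset. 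That is a patchable detail, not a conceptual gap. You also (rightly) dispense with the paper's auxiliary minimum-total-charging selection over the optimal set $X^*$, which is superfluous here: under the hypothesis $(\alpha+\beta)>0$, both your perturbation and the paper's strictly lower the cost, so the contradiction with optimality is immediate.
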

	\begin{proof}
		We prove this claim by contradiction. So, assume at some time $\tau~\in~\{1,\dots,N\}$ the optimal solution  to ($\mathcal{P}_1$) is $\tilde{\mathbf{x}}^{(\tau)}~=~[\tilde{P}_{\rm grid}^{(\tau)}\;\tilde{P}_{\rm ch}^{(\tau)}\;\tilde{P}_{\rm dis}^{(\tau)}\;\tPc^{(\tau)}]^{\rm T}$, where $0~\leq~\tPc^{(\tau)}~\leq~\Psol^{(\tau)}$, $0~<~\tilde{P}_{\rm ch}^{(\tau)}\leq P_{\rm ch,max}$, and $0<\tilde{P}_{\rm dis}^{(\tau)}\leq\Pmaxd$, i.e., it is optimal to simultaneously charge and discharge the ESS. The objective function is given in~\eqref{eqn:fcost_tou} where $c_e^{(t)}>0$, $\alpha,\beta\geq 0$, and $(\alpha+\beta)>0$ for all $t$. 
		
		We consider the situation where excess power cannot be exported onto the grid ($\tilde{P}_{\rm grid}^{(\tau)}\geq0$). We will prove this by looking at the following two solutions for $\tPgrid^{(\tau)}$: (i) $0<\tilde{P}_{\rm grid}^{(\tau)}$, and (ii) $0=\tilde{P}_{\rm grid}^{(\tau)}$.
		
	First, we consider Case (i) when the optimal solution at time $\tau$ is such that $0<\tilde{P}_{\rm grid}^{(\tau)}$, $\tilde{P}_{\rm ch}^{(\tau)},\tilde{P}_{\rm dis}^{(\tau)}$, and $0\leq\tPc^{(\tau)}\leq\Psol^{(\tau)}$. We leverage the complementary slackness conditions at time $\tau$ in~\eqref{eqn:K3_g}-\eqref{eqn:K3_sol}, therefore, $\lgridl^{\tau}=\lchl^{\tau}=\ldisl^{\tau}=0$. We also determine that $\lchu^{\tau},\ldisu^{\tau},\lsoll^{\tau},\lsolu^{\tau}\geq 0$ due to the assumptions $\tilde{P}_{\rm ch}^{(\tau)}\leq P_{\rm ch,max}$ and $\tilde{P}_{\rm dis}^{(\tau)}\leq\Pmaxd$, the solar power curtailment satisfies~\eqref{eqn:opt_curtlims}, and the dual feasibility condition. Using the above conditions on the Lagrange multipliers at time $\tau$ together with~\eqref{eqn:dL1}-\eqref{eqn:dL4}, we have: 
		\begin{align}
		&c_e^{(\tau)} = \mu_{\tau}, \\
		&\alpha + \lchu^{\tau} + \eta_c\Delta t\sum_{n=t}^{N}(\lsocu^{n}-\lsocl^n)+c_e^{(\tau)}=0, \label{eqn:dL2_c1}\\
		&\beta+ \ldisu^{\tau} + \eta_d\Delta t\sum_{n=t}^{N}(\lsocl^{n}-\lsocu^n)-c_e^{(\tau)}=0,\label{eqn:dL3_c1}\\
		&-\lsoll^\tau+\lsolu^\tau+\mu_{\tau}=0. \label{eqn:dL4_c1}
		\end{align}
		Solving for $\mathcal{I}=\sum_{n=t}^{N}(\lsocu^{n}-\lsocl^n)$ in~\eqref{eqn:dL2_c1}, and then replacing $-\mathcal{I}$ in~\eqref{eqn:dL3_c1}, we get:
		\begin{eqnarray}
		\left(\beta + \frac{\eta_d}{\eta_c}\alpha\right)+\ldisu^{\tau}+\frac{\eta_d}{\eta_c}\lchu^{\tau}+\left(\frac{\eta_d}{\eta_c}-1\right)c_e^{(\tau)} = 0. \label{eqn:c1_cont}
		\end{eqnarray}
		Note that by dual feasibility condition \eqref{eqn:K2_batt}, the second and third terms in \eqref{eqn:c1_cont} are nonnegative. The first term in parenthesis and the fourth term in~\eqref{eqn:c1_cont} are strictly positive since we assumed $\alpha,\beta\geq0$, $(\alpha+\beta)>0$, $c_e^{(\tau)}>0$, and $0~<~\eta_c~<~1<\eta_d$. Thus, we obtain a contradiction since the left hand side of~\eqref{eqn:c1_cont} cannot equal 0. Therefore, in a situation where excess power cannot be exported to the grid, a solution to ($\mathcal{P}_1$) where $\tilde{P}_{\rm grid}^{(t)}>0$ and the ESS is simultaneously charging and discharging for some $t\in\{1,\dots,N\}$ is suboptimal because it does not satisfy the KKT conditions given in~\eqref{eqn:K1a_g}-\eqref{eqn:K4}. 
		
		Next, we consider Case (ii) where the optimal solution is such that $\tPgrid^{(\tau)}=0$, $0~<~\tilde{P}_{\rm ch}^{(\tau)}\leq P_{\rm ch,max}$, and $0<\tilde{P}_{\rm dis}^{(\tau)}\leq\Pmaxd$ at time $\tau\in\{1,\dots,N\}$. This case captures the situation where there is high PV penetration, i.e. $\Psol^{(\tau)}>P_{\rm L,house}^{(t)}$ and $\tPgrid^{(\tau)}=0$. We break this case further into two cases with respect to curtailment:
		\begin{enumerate}
			\item $0\leq \tPc^{(\tau)}<\Psol^{(\tau)}$, and
			\item $\tPc^{(\tau)}=\Psol^{(\tau)}$. 
		\end{enumerate}
		In Case 1, using the complementary slackness conditions at time $\tau$ in~\eqref{eqn:K3_g}-\eqref{eqn:K3_sol}, we determine that $\lgridl^\tau=\lchl^\tau=\ldisl^\tau=\lsolu^\tau=0$. Then, using \eqref{eqn:dL4}, we obtain:
		\begin{align*}
		\mu_\tau = \lsoll^\tau,
		\end{align*}
		where $\lsoll^\tau\geq0$ by dual feasibility in \eqref{eqn:K2_sol}. Thus, $\mu_\tau\geq0$, and we recover \eqref{eqn:c1_cont} again by combining \eqref{eqn:dL2_c1} and \eqref{eqn:dL3_c1}. We obtain a contradiction since \eqref{eqn:c1_cont} does not hold as the left hand side is strictly greater than zero. Therefore, in Case 1, a solution such that $\tPgrid^{(\tau)}=0$, $0\leq \tPc^{(\tau)}<\Psol^{(\tau)}$, and the ESS is simultaneously charging and discharging is suboptimal.
		
		Next, we consider Case 2 where the optimal solution $\xopt^{(\tau)}$ at time $\tau\in\{1,\dots,N\}$ is such that $\tPgrid^{(\tau)}=0$, $\tPc^{(\tau)}=\Psol^{(\tau)}$ and the ESS is simultaneously charging and discharging. In this case, we will use the primal variables. Using the power balance in \eqref{eqn:opt_end} at time $\tau$, we obtain: 
		\begin{align*}
		P_{\rm L, house}^{(\tau)} = \tPdis^{(\tau)}-\tPch^{(\tau)},
		\end{align*}
		which implies $\tPdis^{(\tau)}\geq\tPch^{(\tau)}>0$ since we assume $P_{\rm L, house}^{(\tau)}\geq0$ and the ESS is simultaneously charging and discharging at time $\tau$. Thus, the solution at time $\tau$ in $\xopt_N$ is of the form:
		\begin{align*}
		\xopt^{(\tau)}&=[\tPgrid^{(\tau)}\;\tPch^{(\tau)}\;\tPdis^{(\tau)}\;\tPc^{(\tau)}]^{\rm T}\\
		&= [0\;\tPch^{(\tau)}\;\tPdis^{(\tau)}\;\Psol^{(\tau)}]^{\rm T},
		\end{align*} 
		where $\tPdis^{(\tau)}\geq\tPch^{(\tau)}>0$. Without loss of generality, assume that time $\tau$ is the last time there is simultaneous charging and discharging.
%		To prove this statement, we will introduce some notation. 
Let $X^*$ be the set of all solutions of ($\probl$). % We say that a solution $\xvec_N$ satisfies property $\Sp$ if it satisfies the non-simultaneous charging and discharging at all times $t\in\{1,\ldots,N\}$. For a solution $\xvec_N\in X^*$, if $\xvec_N$ does not satisfy $\Sp$, we let $\tau(\xvec_N)$ be the last time that we have simultaneous charging and discharging, otherwise, we let $\tau(\xvec_N)=0$. 
Finally, for a solution $\xvec_N\in X^*$, define its total charging $\Pch(\xvec_N)$ to be: 
		\begin{align*}
		\Pch(\xvec_N)=\sum_{t=1}^N\Pch^{(t)}.
		\end{align*}
		Note that $X^*$ is a closed and bounded set and hence, a solution with minimum total charging always exists, i.e.\ the set $\argmin_{\xvec_N\in X^*}\Pch(\xvec_N)$ is non-empty. Let $\xopt_N\in \argmin_{\xvec_N\in X^*}\Pch(\xvec_N)$. 		
%		We claim that $\xopt^{(\tau)}=[\tPgrid^{(\tau)}\;\tPch^{(\tau)}\;\tPdis^{(\tau)}\;\tPc^{(\tau)}]^{\rm T}$ satisfies property $\Sp$. To show this, assume that the contrary holds. Then, for the last simultaneous charging and discharging time, we have  $t^*=\tau(\xopt_N)\in \{1,\ldots,N\}$. By the definition of $\tau(\xopt_N)$, either $\tPch^{(t)}=0$ or $\tPdis^{(t)}=0$, for $t\in \{t^*+1,\ldots,N\}$ while $\tPch^{(\ts)},\tPdis^{(\ts)}>0$.
		Let $\delta=\tPdis^{(\tau)}-\tPch^{(\tau)}\geq0$. First, let us assume that for all time $t>\tau$, $\tPch^{(t)}=0$. In this case, define $\xhat_N\in \R^{4N}$ by: 
		\begin{align}\label{eqn:bhxdef}
		\bhx^{(t)} = 
		\begin{cases}
		\btx^{(t)}&t\not=\tau\\
		[0\quad 0\quad \delta\quad \Psol^{(t)}]^{\rm T} & t=\tau
		\end{cases},
		\end{align}
		i.e., $\xopt_N$ and $\xhat_N$ are identical except that $\hPch^{(\tau)}=0$ and $\hPdis^{(\tau)}=\delta$. Note that since, $\xhat_N=\xopt_N$ for all $t\neq\tau$, $\xhat_N$ satisfies \eqref{eqn:opt_pgrid}, \eqref{eqn:opt_pch}, \eqref{eqn:opt_pdis}, and \eqref{eqn:opt_end} for all $t\not=\ts$. Similarly, for $t=\tau$, since $\xopt_N$ is a feasible point and $\hPch^{(\tau)}-\hPdis^{(\tau)}=\tPch^{(\tau)}-\tPdis^{(\tau)}$, \eqref{eqn:opt_pgrid} and \eqref{eqn:opt_end} hold for $t=\tau$. Using \eqref{eqn:opt_battSOC}, if we let $\st{\hat{E}}$ be the state of charge for the solution $\xhat_N$, then \eqref{eqn:opt_battSOClims} is satisfied for $t<\tau$ since $\bhx^{(t)}=\btx^{(t)}$ for all $t<\tau$. For time $t=\tau$, we have: 
		\begin{align}\label{eqn:SOCreduce}
		\hat{E}^{(t+1)}&= \hat{E}^{(t)} + \eta_c \Delta t\hat{P}_{\rm ch}^{(t)} -\eta_d\Delta t \hat{P}_{\rm dis}^{(t)}\cr 
		&=\tilde{E}^{(t)}- \eta_d\Delta t \delta\cr 
		&=\tilde{E}^{(t)}+\eta_c \Delta t\tilde{P}_{\rm ch}^{(t)} -\eta_d\Delta t \tilde{P}_{\rm dis}^{(t)}+(\eta_d-\eta_c)\Delta t \tilde{P}_{\rm ch}^{(t)}\cr 
		&=\tilde{E}^{(t+1)}+(\eta_d-\eta_c)\Delta t \tilde{P}_{\rm ch}^{(t)}.
		\end{align}
		The feasibility of $\xopt_N$ and the equalities above imply that: 
		\begin{align*}
		\Emin\leq \tilde{E}^{(t+1)}<\hat{E}^{(t+1)}\leq\tilde{E}^{(t)}\leq E_{\max}.
		\end{align*}
		Thus, at time $t=\tau$, constraints~\eqref{eqn:opt_pgrid}-\eqref{eqn:opt_end} are met. Since we assumed that $\tPch^{(t)}=0$ for $t>\tau$, we recursively have:
		\begin{align*}
		\Emin\leq \tilde{E}^{(t+1)}<\hat{E}^{(t+1)}\leq \hat{E}^{(\tau+1)}<\tilde{E}^{(\tau)}\leq E_{\max}.
		\end{align*}
		Therefore, for all time $t$, all the constraints \eqref{eqn:opt_pgrid}-\eqref{eqn:opt_end} hold for the vector $\xhat_N$, and hence, $\xhat_N$ is a feasible point. On the other hand, by \eqref{eqn:fcost_tou}, we have 
		\begin{align*}
		f_{\rm cost}({\xhat_N})=f_{\rm cost}({\xopt_N})-(\alpha+\beta)\tPch^{(\tau)}< f_{\rm cost}({\xopt_N}),
		\end{align*}
		because $\tPch^{(\tau)}>0$, $\alpha,\beta\geq0$, and $(\alpha+\beta)>0$. Thus, by our construction, we obtain a contradiction to the fact that $\xopt_N\in \argmin_{\xvec_N\in X^*}\Pch(\xvec_N)$.
		  
		For Case 2, it remains to consider the case that $\tPch^{(t)}>0$ for some $t>\tau$. Let $\ts$ be the first time after $\tau$ such that $\tPch^{(\ts)}>0$. In this case, let
		\begin{align}
		p=\min\left(\frac{\eta_c\tPch^{(\ts)}}{\eta_d-\eta_c},\tPch^{(\tau)}\right).
		\end{align}
		Similar to \eqref{eqn:bhxdef}, define: $\bhx^{(t)} =$
		\begin{align}\label{eqn:bhxdef2}
		\begin{cases}
		\btx^{(t)}&t\not=\tau,\ts\\
		[\tPgrid^{(\tau)},\; \tPch^{(\tau)}-p,\; \tPdis^{(\tau)}-p,\; \tPc^{(\tau)}]^{\rm T} & t=\tau\\
		[\tPgrid^{(\ts)}-a,\; \tPch^{(\ts)}-(\frac{\eta_d}{\eta_c}-1) p,\; 0,\; \tPc^{(\ts)}+b]^{\rm T}&t=\ts
		\end{cases}
		\end{align}
		where $a\in [0,\tPgrid^{(\ts)}]$, $b\in [0,\Psol^{(\ts)}-\tPc^{(\ts)}]$, such that $a~+~b~=~(\frac{\eta_d}{\eta_c}-1)p$. To show that such an $a,b$ exist, using the power balance equation in \eqref{eqn:opt_end}, we have:
		\begin{align*}
		\tPgrid^{(\ts)}+(\Psol^{(\ts)}-\tPc^{(\ts)})&\geq \tPch^{(\ts)}\geq \textstyle(\frac{\eta_d}{\eta_c}-1)p,
		\end{align*}
		where the first inequality follows as $P_{\rm L,house}^{(\ts)}\geq 0$, and the second inequality follows from the choice of $p\leq \frac{\eta_c\tPch^{(\ts)}}{\eta_d-\eta_c}$. {Therefore, such an $a$ and $b$ as defined above exists.}
		Next, we show $\bhx_N$ in~\eqref{eqn:bhxdef2} satisfies the constraints in ($\mathcal{P}_1$): 
		\begin{enumerate}[a.]
			\item Constraint \eqref{eqn:opt_pgrid}: Note that $\hPgrid^{(t)}=\tPgrid^{(t)}\geq 0$ for all $t\neq \ts$. For $t=\ts$, since $a\in [0,\tPgrid^{(\ts)}]$, it follows that $0~\leq~\tPgrid^{(\ts)}-a=\hPgrid^{(\ts)}$. Thus, $\hPgrid^{(t)}$ satisfies \eqref{eqn:opt_pgrid} $\forall t$.
			\item{ Constraint \eqref{eqn:opt_pch}: Note that $\hPch^{(t)}=\tPch^{(t)}$ for all $t\neq\tau,\ts$ and $0\leq p\leq \tPch^{(\tau)}$. Then, $0\leq  \tPch^{(\tau)}-p\leq \tPch^{(\tau)}\leq P_{\max}$ for $t=\tau$, and we have $0\leq (\frac{\eta_d}{\eta_c}-1)p\leq \tPch^{(\ts)}\leq P_{\max}$ for $t=\ts$. Therefore, $\hPch^{(t)}$ satisfies \eqref{eqn:opt_pch} $\forall t$.} 
			
			\item {Constraint \eqref{eqn:opt_pdis}: Recall $\hPdis^{(t)}=\tPdis^{(t)}$ $\forall t\neq\tau,\ts$. It is clear $\hPdis^{(\ts)}$ satisfies \eqref{eqn:opt_pdis}. For $t=\tau$,  $0\leq p\leq \tPch^{(\tau)}\leq  \tPdis^{(\tau)} $ (by assumption), and $0\leq  \tPdis^{(\tau)}-p=\hPdis^{(\tau)}\leq  \tPdis^{(\tau)}\leq P_{\max}$. Thus, $\hPdis^{(t)}$ satisfies \eqref{eqn:opt_pdis} $\forall t$.} 
			
			\item Constraint \eqref{eqn:opt_battSOClims}: Since $\btx^{(t)}=\bhx^{(t)}$ for $t<\tau$, \eqref{eqn:opt_battSOClims} holds for $t<\tau$. For $t=\tau$, similar to \eqref{eqn:SOCreduce} and using the definition of $\xhat_N$ in \eqref{eqn:bhxdef2}, we have 
			{\begin{align}\label{eqn:SOCreduce2}
				\hat{E}^{(\tau+1)}&= \hat{E}^{(\tau)} + \eta_c \Delta t\hat{P}_{\rm ch}^{(\tau)} -\eta_d\Delta t \hat{P}_{\rm dis}^{(\tau)}\cr 
				&=\tilde{E}^{(t)}+\eta_c \Delta t(\tilde{P}_{\rm ch}^{(\tau)} -p)-\eta_d\Delta t (\tilde{P}_{\rm dis}^{(\tau)}-p)\cr 
				&=\tilde{E}^{(t+1)}+(\eta_d-\eta_c)\Delta t p.
				\end{align}
				By the main assumption of Case 2, i.e. $\tPdis^{(\tau)}\geq\tPch^{(\tau)}$, we have $\hPdis^{(\tau)}>\hPch^{(\tau)}$. Using this fact and the first equality in \eqref{eqn:SOCreduce2}, we have $\hat{E}^{(\tau+1)}\leq \hat{E}^{(\tau)}= \tilde{E}^{(\tau)}\leq \Emax$. From the last equality in \eqref{eqn:SOCreduce2}, we get $0\leq \tilde{E}^{(\tau+1)}\leq \hat{E}^{(\tau+1)}$. Then, overall, we have:
				\begin{align}\label{eqn:SOCtstar}
				0&\leq \tilde{E}^{(\tau+1)}\cr 
				&\leq\tilde{E}^{(\tau+1)}+ (\eta_d-\eta_c)\Delta t p\cr 
				&=\hat{E}^{(\tau+1)}\leq \hat{E}^{(\tau)}= \tilde{E}^{(\tau)}\leq \Emax.
				\end{align}} 
			Therefore, \eqref{eqn:opt_battSOClims} holds for $t=\tau$. For $t=\tau+1,\ldots,\ts-1$, we have $\hPch^{(t)}=\tPch^{(t)}=0$ and hence, 
			\begin{align*}
			\hat{E}^{(t+1)}=\hat{E}^{(t)}-\eta_d\Delta t \hat{P}_{\rm dis}^{(t)},
			\end{align*}
			and inductively, we can show that 
			\[\Emax\geq \hat{E}^{(\tau+1)}\geq \hat{E}^{(\tau+2)}\geq \cdots \geq \hat{E}^{(\ts)},\]
			and $\hat{E}^{(\tau+1)}=\tilde{E}^{(\tau+1)}+(\eta_d-\eta_c)\Delta t p$. This implies that $\hat{E}^{(t+1)}\geq \tilde{E}^{(t+1)}\geq 0$ as $\btx^{(t)}$ $\forall t<(\ts-1)$ is a feasible solution. For $t=\ts$, we have:
				\begin{align}
				\hat{E}^{(\ts+1)}&=\hat{E}^{(\ts)}+\eta_c\Delta t \hat{P}_{\rm ch}^{(\ts)}\cr 
				&=\tilde{E}^{(\ts)}+(\eta_d-\eta_c)\Delta t p + \cr
				&\qquad\qquad\eta_c\Delta t\left(\tPch^{(\ts)}-\left(\frac{\eta_d}{\eta_c}-1\right)p\right)\cr 
				&=\tilde{E}^{(\ts)}+\eta_c\Delta t\tPch^{(\ts)}\cr 
				&=\tilde{E}^{(\ts+1)}.	
				\end{align}
				Since $\tilde{E}^{(\ts+1)}$ satisfies \eqref{eqn:opt_battSOClims}, then $\hat{E}^{(\ts+1)}$ also satisfies \eqref{eqn:opt_battSOClims}. Additionally, since $\bhx^{(t)}=\btx^{(t)}$ for $t>\ts$, then \eqref{eqn:opt_battSOClims} also holds for $t>\ts$. Thus, $\hat{E}^{(t+1)}$ satisfies \eqref{eqn:opt_battSOClims} $\forall t$.
			\item  Constraint \eqref{eqn:opt_curtlims}: For this constraint, we have $\hPc^{(t)}=\tPc^{(t)}$ for all $t\neq \ts$. For $t=\ts$, we have $\hPc^{(\ts)}=\tPc^{(\ts)}+b$, where $b\in[0,\Psol^{(\ts)}-\tPc^{(\ts)}]$, which satisfies the condition \eqref{eqn:opt_curtlims} for $t=\ts$. {Thus, $\hPc^{(t)}$ satisfies \eqref{eqn:opt_pch} $\forall t$.}
			\item Constraint \eqref{eqn:opt_end}: The constraint \eqref{eqn:opt_end} is satisfied for all $t\neq\ts,\ts$ since $\bhx^{(t)}=\btx^{(t)}$ for all $t\neq\tau,\ts$. Notice that for $t=\tau$ and $t=\ts$, the vector $\xhat_N$ defined in \eqref{eqn:bhxdef2} is chosen such that \eqref{eqn:opt_end} is satisfied. Thus, the power balance equation is satisfied $\forall t$.
		\end{enumerate}
		Based on the above discussion, we have shown that $\xhat_N$ is a feasible solution to $(\probl)$. Also, we have: 
			\begin{align}
			\fcost(\xhat_N)&=\fcost(\xopt_N)-p\left(\alpha+\beta+\alpha\left(\frac{\eta_d}{\eta_c}-1\right)\right)-ac_e^{(\ts)}\nonumber\\
			&< \fcost(\xopt_N), 
			\end{align}
		since $\alpha,\beta\geq 0$ and $(\alpha+\beta)>0$. Thus, $\xhat_N\in X^*$. But $\Pch(\xhat_N)=\Pch(\xopt_N)-\frac{\eta_d}{\eta_c}p<\Pch(\xopt_N)$ which contradicts the fact that $\xopt_N$ is a minimizer of $\Pch(\cdot)$ among the solutions of $(\probl)$. Therefore, we conclude that such minimizers do not simultaneously charge and discharge.
\end{proof} 
For Prop. \ref{thm:ce_tou_sub}, notice that when $\alpha=\beta=0$, a solution with simultaneous ESS charging and discharging may be equivalent to a solution without simultaneous ESS charging and discharging behavior in certain situations. To ensure that there is an optimal solution without simultaneous ESS charging and discharging, we require $\alpha,\beta\geq0$ such that $(\alpha~+~\beta)>0$. Furthermore, if $\alpha=\beta=0$, from inspection of \eqref{eqn:c1_cont} in the proof of Prop.~\ref{thm:ce_tou_sub} we can see that simultaneous charging and discharging will be optimal when $\tPgrid^{(t)}=0$ and $0<\tPc^{(t)}\leq\Psol^{(t)}$. 

Next, we show that simultaneous charging and discharging is always suboptimal in a net metering situation for $(\mathcal{P}_1)$ where $\alpha,\beta\geq 0$. 
	\begin{proposition} \label{thm:ce_tou_nm}
		Assume $\alpha,\beta\geq 0$, $0<\eta_c<1<\eta_d$, and $c_e^{(t)}>0$ for all $t$. A solution to ($\mathcal{P}_1$) where the ESS simultaneously charges and discharges ($0<\tilde{P}_{\rm ch}^{(t)}\leq \Pmaxc$ and $0<\tilde{P}_{\rm dis}^{(t)}\leq \Pmaxd$ for any $t~\in~\{1,\dots,N\}$) is suboptimal if there is net metering where customers \textit{can} export excess power to the grid, i.e. $\Pg^{(t)}$ can be negative.
	\end{proposition}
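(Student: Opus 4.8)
The plan is to prove this by contradiction via the KKT conditions, mirroring Case~(i) of Proposition~\ref{thm:ce_tou_sub}, and to observe that the net metering assumption removes the one obstruction that forced the more involved primal construction there. So I would assume that at some time $\tau\in\{1,\dots,N\}$ the optimal solution satisfies $0<\tPch^{(\tau)}\leq\Pmaxc$ and $0<\tPdis^{(\tau)}\leq\Pmaxd$. Under net metering the nonnegativity constraint \eqref{eqn:opt_pgrid} is dropped from $(\probl)$, so the multiplier $\lgridl^{\tau}$ no longer appears and the stationarity condition \eqref{eqn:dL1} collapses to $c_e^{(\tau)}-\mu_\tau=0$, i.e.\ $\mu_\tau=c_e^{(\tau)}>0$ \emph{unconditionally}, rather than only in the subcase $\tPgrid^{(\tau)}>0$.

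Next I would invoke complementary slackness. Since $\tPch^{(\tau)}>0$ and $\tPdis^{(\tau)}>0$, the conditions \eqref{eqn:K3_battl} give $\lchl^{\tau}=\ldisl^{\tau}=0$, while dual feasibility \eqref{eqn:K2_batt} guarantees $\lchu^{\tau},\ldisu^{\tau}\geq0$. Substituting $\lchl^{\tau}=\ldisl^{\tau}=0$ together with $\mu_\tau=c_e^{(\tau)}$ into \eqref{eqn:dL2} and \eqref{eqn:dL3} yields exactly the relations \eqref{eqn:dL2_c1}--\eqref{eqn:dL3_c1}. Solving one of them for $\mathcal{I}=\sum_{n=\tau}^{N}(\lsocu^{n}-\lsocl^{n})$ and eliminating it from the other reproduces \eqref{eqn:c1_cont}:
\begin{equation*}
\left(\beta+\frac{\etad}{\etac}\alpha\right)+\ldisu^{\tau}+\frac{\etad}{\etac}\lchu^{\tau}+\left(\frac{\etad}{\etac}-1\right)c_e^{(\tau)}=0.
\end{equation*}

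Finally I would argue the sign contradiction. By dual feasibility the terms $\ldisu^{\tau}$ and $\tfrac{\etad}{\etac}\lchu^{\tau}$ are nonnegative, and $\beta+\tfrac{\etad}{\etac}\alpha\geq0$ since $\alpha,\beta\geq0$. Crucially, because $\mu_\tau=c_e^{(\tau)}>0$ is now guaranteed, the last term $\left(\tfrac{\etad}{\etac}-1\right)c_e^{(\tau)}$ is \emph{strictly} positive (as $\etad>\etac>0$), so the left-hand side cannot vanish, a contradiction. The key point---and what makes this proof far shorter than the previous one---is that the single KKT argument now covers every situation: the troublesome case of Proposition~\ref{thm:ce_tou_sub} (namely $\tPgrid^{(\tau)}=0$ with full curtailment $\tPc^{(\tau)}=\Psol^{(\tau)}$, which there required constructing an alternative feasible point with strictly smaller total charging) simply cannot arise, because with the grid constraint absent we always have $\mu_\tau=c_e^{(\tau)}$ rather than $\mu_\tau=c_e^{(\tau)}-\lgridl^{\tau}$. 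Consequently the strict positivity comes entirely from $c_e^{(\tau)}>0$ and $\etad>\etac$, so the extra hypothesis $(\alpha+\beta)>0$ required in Proposition~\ref{thm:ce_tou_sub} is no longer needed.
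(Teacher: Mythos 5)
Your proof is correct and follows essentially the same route as the paper's: a KKT-based contradiction in which, with constraint \eqref{eqn:opt_pgrid} dropped under net metering, \eqref{eqn:dL1} gives $\mu_\tau=c_e^{(\tau)}>0$ unconditionally, and re-deriving \eqref{eqn:c1_cont} then yields a sum of nonnegative terms plus the strictly positive term $\left(\frac{\eta_d}{\eta_c}-1\right)c_e^{(\tau)}$, which cannot vanish. Your writeup is merely more explicit than the paper's (which cites the analysis of Prop.~\ref{thm:ce_tou_sub} rather than repeating it), and your closing observation---that the troublesome $\tilde{P}_{\rm grid}^{(\tau)}=0$ case disappears and hence $(\alpha+\beta)>0$ is not needed---correctly explains why the proposition's hypotheses are weaker.
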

\begin{proof}
	This proof is very similar to the proof of Prop.~\ref{thm:ce_tou_sub}. We again use contradiction to prove our claim. Assume that at time $\tau\in\{1,\dots,N\}$ the optimal solution is \linebreak$\tilde{\mathbf{x}}_{\tau}=[\tilde{P}_{\rm grid}^{(\tau)}\;\tilde{P}_{\rm ch}^{(\tau)}\;\tilde{P}_{\rm dis}^{(\tau)}\;\tPc^{(\tau)}]^{\rm T}$ where $0<\tilde{P}_{\rm ch}^{(t)}\leq \Pmaxc$, $0<\tilde{P}_{\rm dis}^{(t)}\leq \Pmaxd$, and $0~\leq~\tPc^{(\tau)}~\leq~\Psol^{(\tau)}$. We use the complementary slackness conditions at time $\tau$ in~\eqref{eqn:K3_battl}-\eqref{eqn:K3_sol}. For this case,~\eqref{eqn:K3_g} is omitted since constraint~\eqref{eqn:opt_pgrid} is excluded for net metering. Using similar analysis as in the proof of Prop.~\ref{thm:ce_tou_sub}, we again obtain~\eqref{eqn:c1_cont}, which leads to a contradiction since the $(\frac{\etad}{\etac}-1)c_e^t$ tern is strictly positive. Thus, under net metering, a solution to ($\mathcal{P}_1$) where the ESS simultaneously charges and discharges for any time $t\in\{1,\dots,N\}$ is suboptimal. \end{proof}
	Note that Propositions \ref{thm:ce_tou_sub} and \ref{thm:ce_tou_nm} still hold when electricity prices are constant, i.e. $c_e^{(t)}~>~0$ is constant for all $t=\{1,\dots,N\}$. For the case that an energy utility charges $c_e^{(t)}=\$0$/kWh for some time $t\in\{1,\dots,N\}$ to flatten grid power demand over time \cite{Spees2008}, we provide a remark on non-simultaneous ESS charging and discharging that follow directly from the propositions above.
	\begin{remark}
		In a situation where there is either net metering or excess power cannot be exported to the grid, if $c_e^{(t)}~=~\$0$/kWh for any $t\in\{1,\dots,N\}$, the results of Props.~\ref{thm:ce_tou_sub}-\ref{thm:ce_tou_nm} hold since we require $\alpha,\beta\geq0$ such that $(\alpha~+~\beta)~>~0$.
	\end{remark}
	\begin{comment}
	\begin{remark}
	\sout{In a situation where there is either net metering or excess power cannot be exported to the grid, if $c_e^{(t)}=\$0$/kWh for some time $t\in\{1,\dots,N\}$ and $\alpha=\beta=0$, then the result of Prop.~\ref{thm:ce_tou_E} holds.}
	%there exists an optimal solution to ($\mathcal{P}_1$) where $P_{\rm ch}^{(t)}$ and $P_{\rm dis}^{(t)}$ are not simultaneously positive.
	\end{remark}
	\end{comment}
	The results from Props. \ref{thm:ce_tou_sub} and \ref{thm:ce_tou_nm} are summarized in Table \ref{tab:results}.
	\begin{table}[t!]
		\centering
		\caption{Proposition Results Summary for $\alpha\geq0$, $\beta\geq0$}
		\label{tab:results} \vspace{-0.2cm}
		\begin{tabular}{|c|c|c|c|}
			\hline
			$(\alpha + \beta)$ & $c_e^{(t)} \; \forall t$ & \begin{tabular}[c]{@{}c@{}}Net\\ Metering\end{tabular} & \begin{tabular}[c]{@{}c@{}}ESS Charging and\\ Discharging Behavior\end{tabular} \\ \hline
			$\geq 0$ &    $>0$    & Yes &      Non-simultaneous                      \\ \hline
			$>0$ &        $\geq0$       &    Yes      &    Non-simultaneous                       \\ \hline
			$=0$&           $=0$               &   Yes   &      Simultaneous            \\ \hline
		    $>0$&     $\geq0$     &        No             &   Non-simultaneous                \\ \hline
			$=0$&          $\geq 0$     &     No &      Simultaneous                    \\ \hline
		\end{tabular}\vspace{-0.5cm}
	\end{table}
	\section{Simulations: ESS Operation}\label{sec:sims}
	To demonstrate the charging and discharging behavior of the ESS model, we provide simulation results for the proposed MPC-based HEMS algorithm in \eqref{eqn:opt_begin}-\eqref{eqn:opt_end} to highlight each proposition in Section~\ref{sec:proofs}. The MATLAB-based modeling system for solving disciplined convex programs, CVX, is used in this work. The MPC-based HEMS optimization is solved in a receding horizon manner with a 24 hour prediction horizon and 1 hour time intervals. 
	
	The residential PV array size is $20 {\rm m}^{2}$ with a tilt of 30\si{\degree} and an efficiency of 16\%. The solar irradiance forecasts were obtained from NOAA USCRN data \cite{noaa_data}. The 5-kWh residential ESS system is restricted between 15\% and 85\% of the maximum SOC to preserve the ESS lifetime \cite{jin2017foresee}. The initial ESS SOC is 2~kWh. The ESS inverter power limit is 3~kW with an inverter efficiency of 95\%. The ESS charging efficiency $\eta_c$ is 95\%, and the discharging efficiency is $\eta_d=\frac{1}{\eta_c}$. 
	\subsection{Simulation Results}
	The proposed HEMS algorithm in~\eqref{eqn:opt_begin}-\eqref{eqn:opt_end} with the objective function in~\eqref{eqn:fcost_tou} is simulated for the case where the price of electricity is constant $c_e^{(t)}=\$0.11$/kWh $\forall t$, $\alpha=\beta=0$, and excess power is not able to be exported  to the grid. The simulation results for this case are provided in Fig.~\ref{fig:flat_ce}. From Fig.~\ref{fig:flat_ce} (top), we can see that $P_{\rm c}^{(t)}=0$ $\forall t$, and when $P_{\rm grid}^{(t)}=0$, the excess solar power from satisfying the total residential load is able to be stored in the ESS, thus allowing $\alpha,\beta=0$ for this case. Additionally, when $P_{\rm grid}^{(t)}>0$, the ESS discharges to contribute power to satisfying the total load. The ESS operating behavior in Fig.~\ref{fig:flat_ce} (middle) and (bottom) highlights that non-simultaneous ESS charging and discharging were optimal solutions at each time $t\in\{1,\dots, N\}$ with the convex ESS model in~\eqref{eqn:batt_begin}-\eqref{eqn:batt_end}. 
	\begin{figure}[t!]
		\centering
		\includegraphics[width=3.5in]{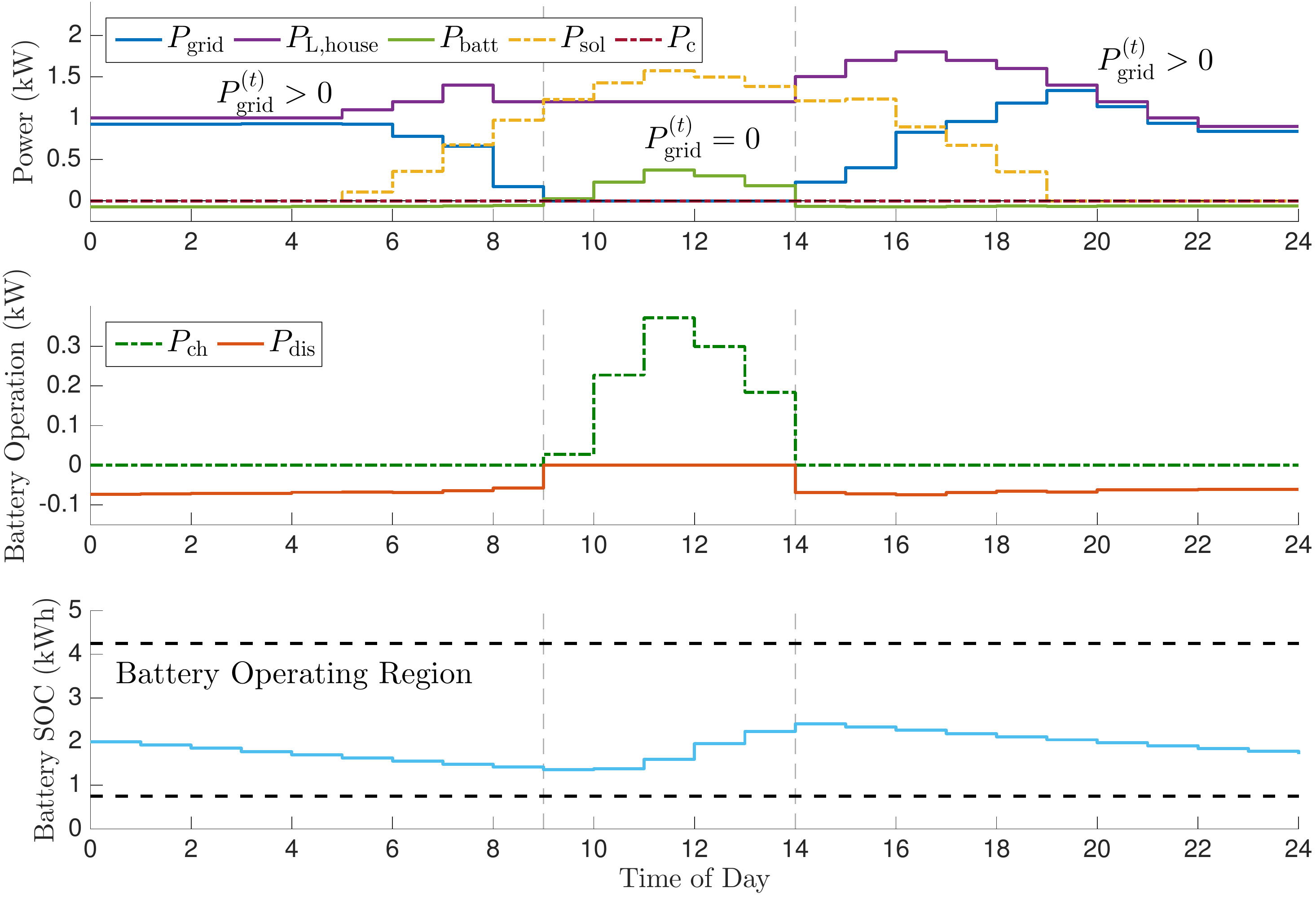} \vspace{-0.35cm}
		\caption{HEMS simulation results for a constant electricity price where excess power is not exported to the grid. Power profiles with HEMS optimization algorithm (top). ESS charging and discharging behavior (middle). ESS state of charge (bottom).}
		\label{fig:flat_ce} \vspace{-0.6cm}
	\end{figure}

	Next, we repeat the simulation above with the available solar \textit{increased} by 150\% and with the same objective function in~\eqref{eqn:fcost_tou} where the price of electricity is constant $c_e^{(t)}=\$0.11$/kWh $\forall t$, there is no ESS usage penalty $\alpha=\beta=0$, and excess power is not able to be exported to the grid. The simulation results for this case are shown in Fig.~\ref{fig:flat_ce_XLsolar_noChfcost}, demonstrating how simultaneous charging and discharging \textit{can} occur when there is a high penetration of solar power and no ESS usage penalty included in the cost function. Recall from the proof of Prop. \ref{thm:ce_tou_sub}, high penetration of solar power is when the available solar power $\Psol^{(t)}$ exceeds the load $P_{\rm L,house}^{(t)}$ in a situation where power cannot be exported to the grid. The ESS operating behavior in Fig.~\ref{fig:flat_ce_XLsolar_noChfcost} (middle) shows simultaneous ESS charging and discharging in the linear model under these operating conditions. Furthermore, we provide an additional simulation shown in Fig.~\ref{fig:flat_ce_XLsolar_wChfcost} with the same high penetration of solar as in the previous example but the cost function in~\eqref{eqn:fcost_tou} is modified such that the ESS charging and discharging penalty is included $\alpha=0.001$, $\beta=0$ (with the price of electricity still constant at $c_e^{(t)}=\$0.11$/kWh $\forall t$). The simulation results in Fig.~\ref{fig:flat_ce_XLsolar_wChfcost} (middle) with non-simultaneous ESS charging and discharging demonstrate how including a penalty on ESS usage in the cost function ensures that a solution with simultaneous ESS charging and discharging is suboptimal.
	\begin{figure}[t!]
		\centering
		\includegraphics[width=3.5in]{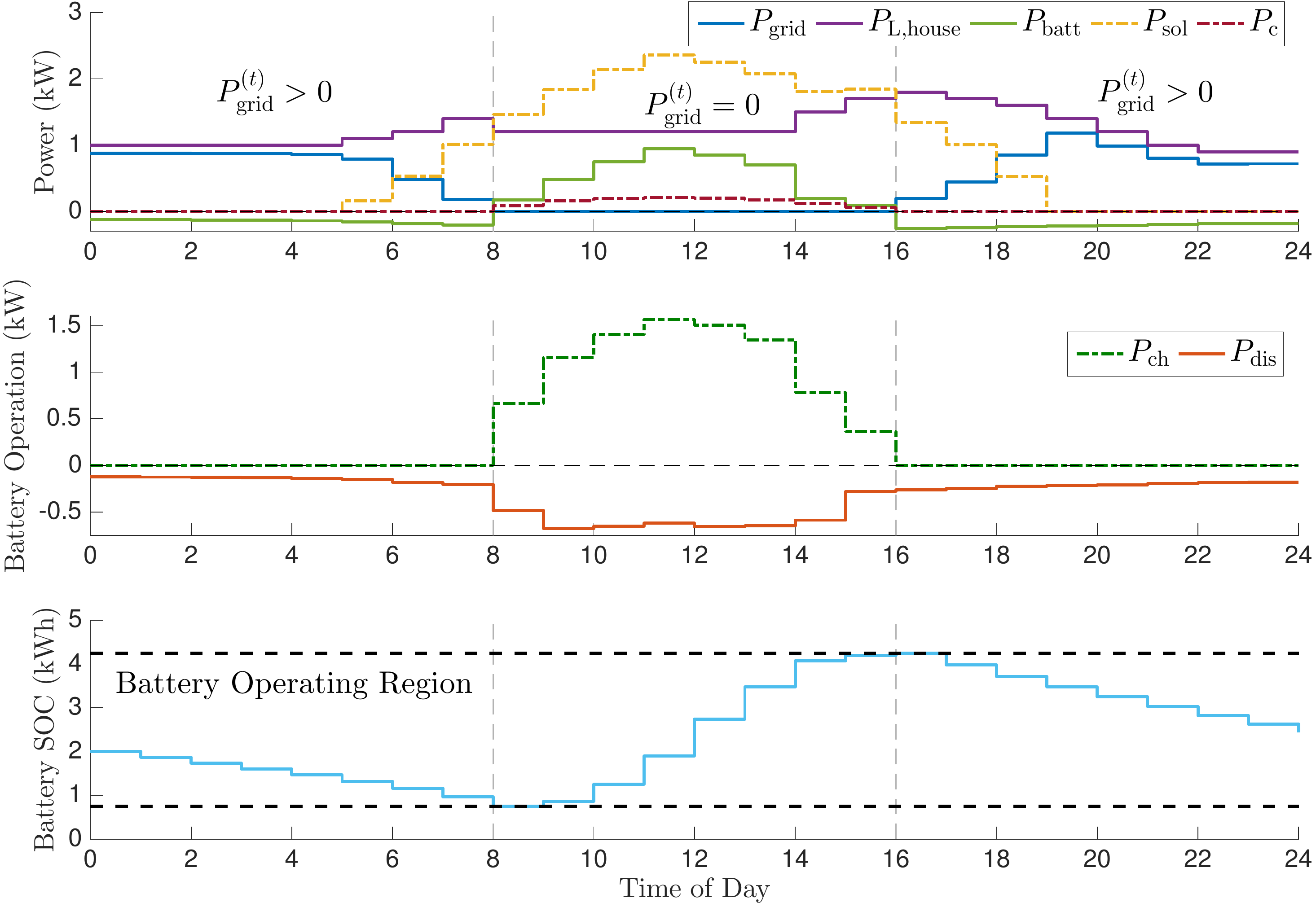} \vspace{-0.35cm}
		\caption{HEMS simulation results for a constant electricity price where excess power is not exported to the grid with a high penetration of solar and no ESS usage penalty in the optimization cost function. Power profiles with HEMS optimization algorithm (top). Simultaneous ESS charging and discharging behavior (middle). ESS state of charge (bottom).}
		\label{fig:flat_ce_XLsolar_noChfcost} \vspace{-0.25cm}
	\end{figure}
	\begin{figure}[t!]
		\centering
		\includegraphics[width=3.5in]{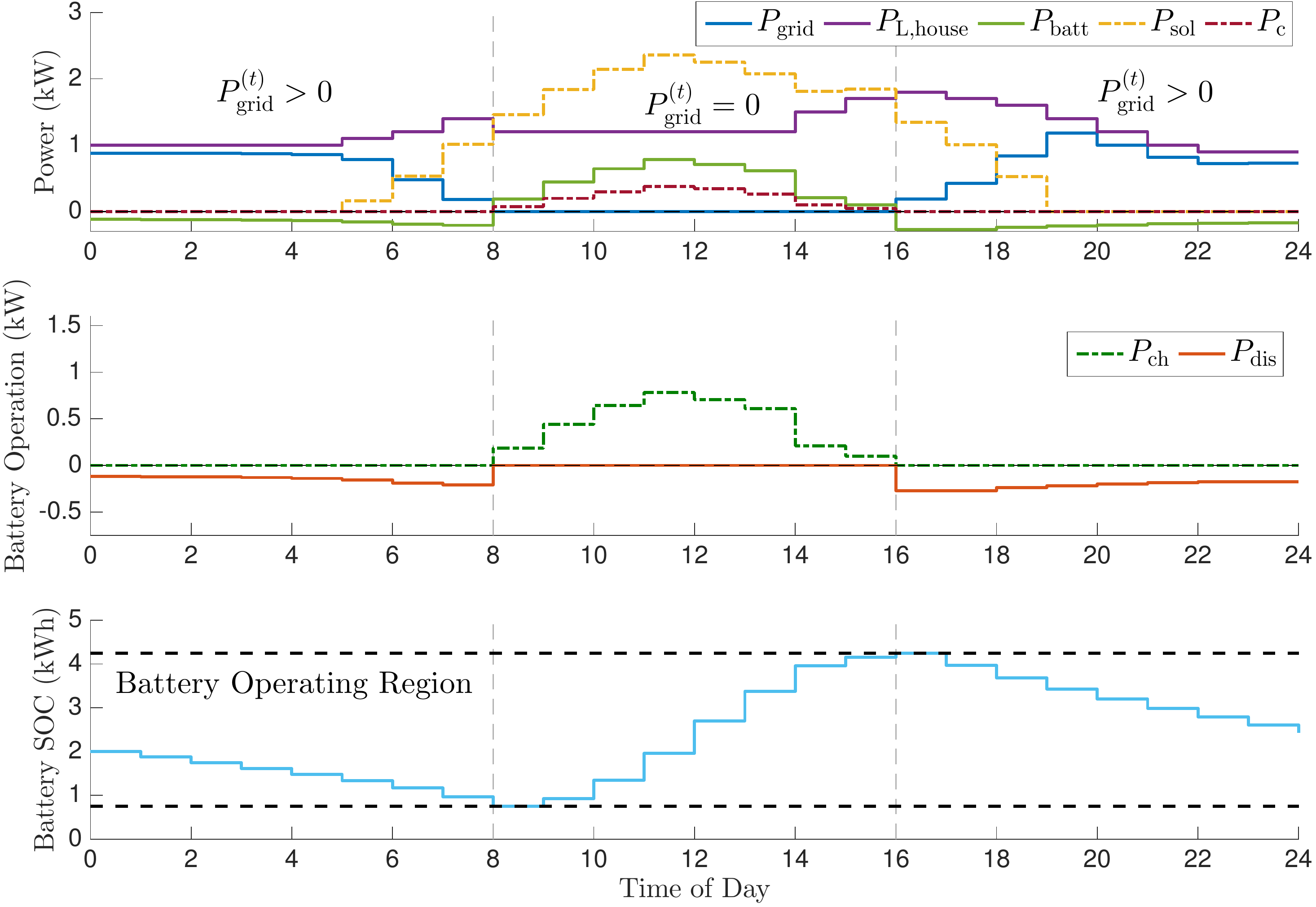} \vspace{-0.35cm}
		\caption{HEMS simulation results for a constant electricity price where excess power is not exported to the grid with a high penetration of solar and ESS usage penalty included in the optimization cost function. Power profiles with HEMS optimization algorithm (top). Non-simultaneous ESS charging and discharging behavior (middle). ESS state of charge (bottom).}
		\label{fig:flat_ce_XLsolar_wChfcost} \vspace{-0.25cm}
	\end{figure}

	Next, we show simulation results for the HEMS algorithm with the objective function in~\eqref{eqn:fcost_tou}, where $\alpha=0.001$ and $\beta=0$, under a TOU pricing structure where excess power cannot be exported to the grid. The TOU pricing schedule is given in Table~\ref{tab:TOU}, and is incorporated into the cost function. The simulation results for this case are provided in Fig.~\ref{fig:tou}. From Fig.~\ref{fig:tou} (top), we see that when $P_{\rm grid}^{(t)}=0$, the ESS charges or discharges based on the TOU pricing schedule. The ESS charges when there is excess solar after satisfying the total residential load. {During the On-Peak pricing period, once the house load surpasses the available solar power, the ESS discharges instead of using grid power.} When $P_{\rm grid}^{(t)}>0$, the ESS charges or discharges depending on the TOU pricing period at that time. The ESS non-simultaneous charging and discharging behavior is shown in Fig.~\ref{fig:tou} (middle) and (bottom).
	\begin{table}[t!]
		\centering
		\caption{TOU Pricing Schedule: Case 1} \vspace{-0.2cm}
		\label{tab:TOU}
		\begin{tabular}{c|c|c}
			Time of Use                                                   & Pricing Period & Price of Electricity $c_e^{(t)}$ \\ \hhline{=|=|=}
			9PM - 9AM                                                     & Off-Peak       & \$0.08/kWh           \\ \hline
			\begin{tabular}[c]{@{}c@{}}9AM - 2PM\\ 6PM - 9PM\end{tabular} & Shoulder       & \$0.13/kWh           \\ \hline
			2PM - 6PM                                                     & On-Peak        & \$0.18/kWh          
		\end{tabular} \vspace{-0.3cm}
	\end{table}
	\begin{figure}[t!]
		\centering
		\includegraphics[width=3.5in]{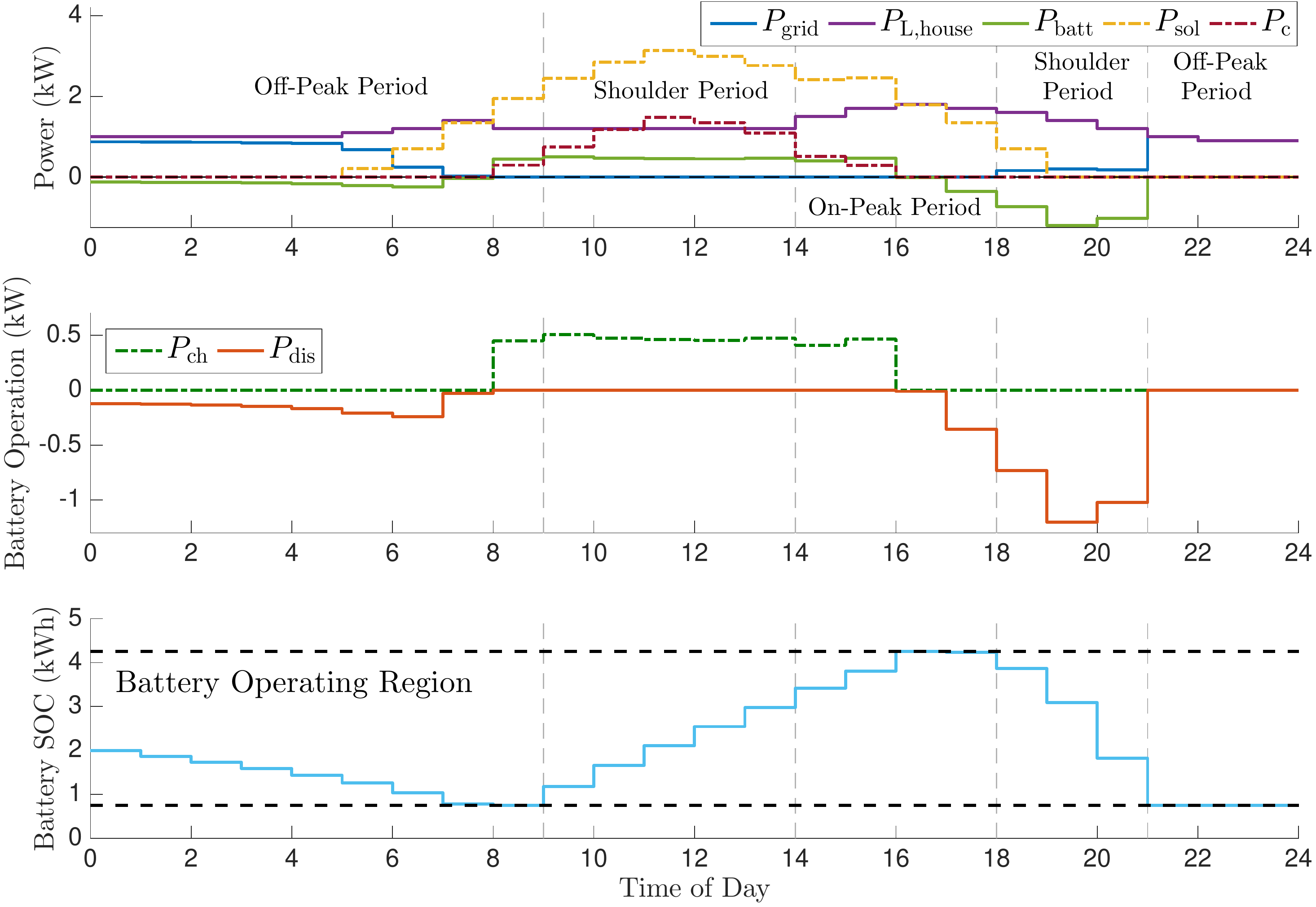} \vspace{-0.5cm}
		\caption{HEMS simulation results with TOU pricing and excess power when power export to the grid is not allowed. Power profiles with HEMS optimization algorithm (top). ESS charging and discharging behavior (middle). ESS state of charge (bottom).}
		\label{fig:tou} \vspace{-0.6cm}
	\end{figure}
	%Next, we show simulation results for the HEMS algorithm with a net metering market structure in Fig.~\ref{fig:nm}. The cost function in this case is given in~\eqref{eqn:fcost_tou} where $c_e^{(t)}~=~\$0.11$/kWh $\forall t$ and $\alpha,\beta=0$. When the available solar power surpasses the power necessary to satisfy the total residential load, the excess power is exported to the grid, which can be seen in Fig.~\ref{fig:nm} (top). Since exporting excess power to the grid offsets the price of drawing power from the grid, the excess power is not stored in the ESS. When grid power is necessary to satisfy the total residential load, the ESS contribution is minimal since the MPC-based algorithm prefers to export power instead of charging the ESS. The ESS charging and discharging behavior is shown in Fig~\ref{fig:nm} (middle) and (bottom) to demonstrate non-simultaneous battery charging and discharging. 
	%\begin{figure}[t!]
	%	\centering
	%	\includegraphics[width=3in]{nm_fig.pdf}
	%	\caption{HEMS simulation results in a net metering market structure. Power profiles with HEMS optimization algorithm (top). ESS charging and discharging behavior (middle). ESS state of charge (bottom).}
	%	\label{fig:nm}
	%\end{figure}
	Lastly, we show simulation results for a HEMS algorithm under net metering with TOU pricing. The TOU pricing schedule is shown in Table~\ref{tab:TOU2}, and notice that $c_e^{(t)}=\$0$/kWh during the Off-Peak period. The TOU pricing schedule is incorporated into the cost function in~\eqref{eqn:fcost_tou}, and $\alpha=\beta=0.001$ to ensure non-simultaneous ESS charging and discharging during the Off-Peak period. The simulation for this case is shown in Fig.~\ref{fig:nm_tou}. From Fig.~\ref{fig:nm_tou} (top), we can see the excess solar is exported to the grid once the ESS is full. During the Off-Peak period, the ESS charges when power can be drawn from the grid for free. During the On-Peak period, the ESS discharges to contribute to the residential load and, due to the net metering structure, available solar is exported onto the grid. The ESS charging and discharging behavior is shown in Fig.~\ref{fig:nm_tou} (middle) and (bottom) to demonstrate non-simultaneous ESS charging and discharging in more detail. 
	\begin{table}[t!]
		\centering
		\caption{TOU Pricing Schedule: Case 2}
		\label{tab:TOU2} \vspace{-0.2cm}
		\begin{tabular}{c|c|c}
			Time of Use                                                   & Pricing Period & Price of Electricity $c_e^{(t)}$ \\ \hhline{=|=|=}
			9PM - 9AM                                                     & Off-Peak       & \$0.00/kWh           \\ \hline
			\begin{tabular}[c]{@{}c@{}}9AM - 2PM\\ 6PM - 9PM\end{tabular} & Shoulder       & \$0.13/kWh           \\ \hline
			2PM - 6PM                                                     & On-Peak        & \$0.18/kWh          
		\end{tabular} \vspace{-0.4cm}
	\end{table}
	\vspace{-0.5cm}
	\begin{figure}[t!]
		\centering
		\includegraphics[width=3.5in]{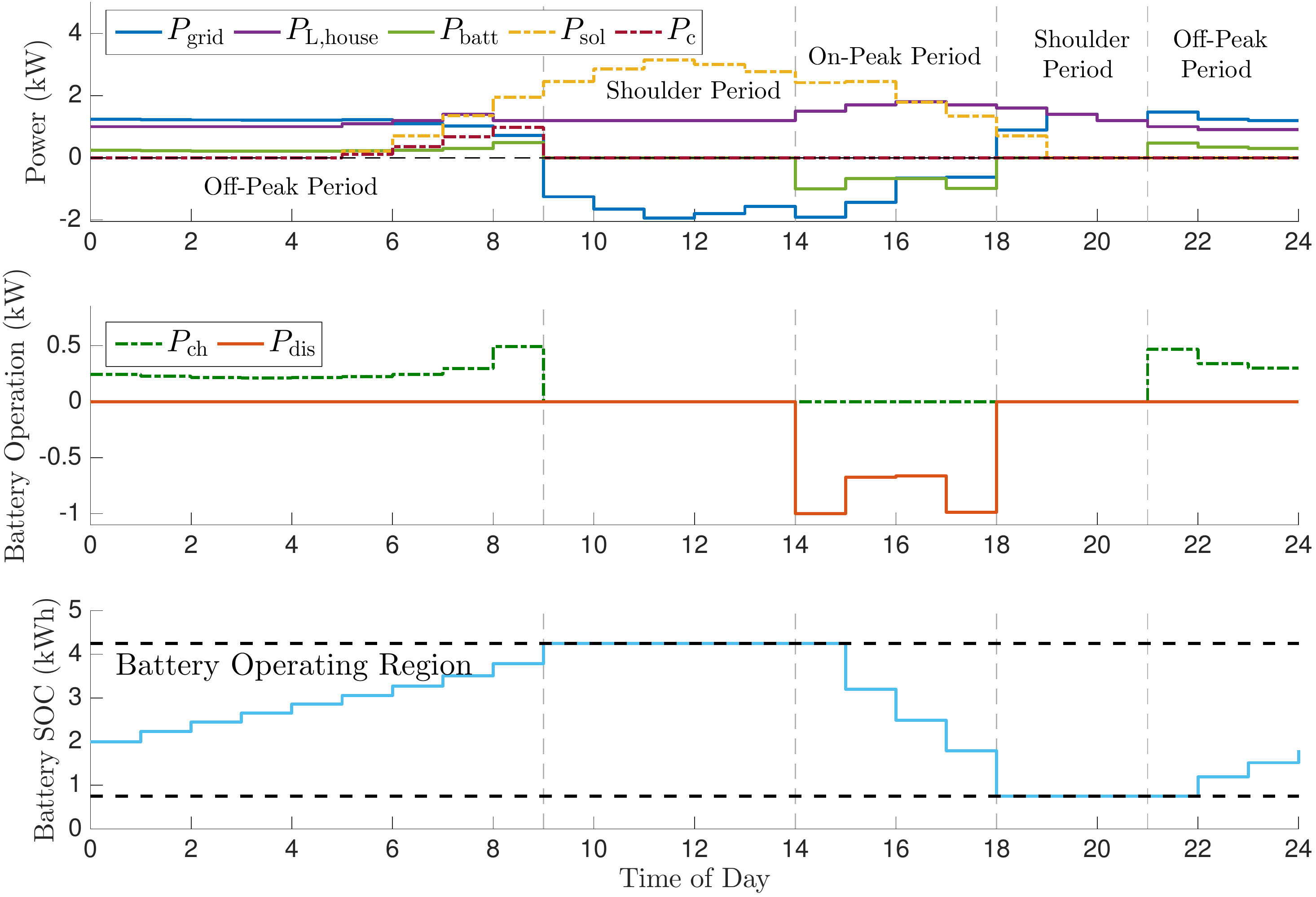} \vspace{-0.35cm}
		\caption{HEMS simulation results under net metering with TOU pricing. Power profiles with HEMS optimization algorithm (top). ESS charging and discharging behavior (middle). ESS state of charge (bottom).}
		\label{fig:nm_tou} \vspace{-0.65cm}
	\end{figure} 

	\section{Conclusion} \label{sec:conclusion}
	This work provides non-simultaneous charging and discharging guarantees for a linear ESS model in an MPC-based HEMS optimization algorithm for various cases. We used the KKT conditions, including both the dual and primal variables, to show that solutions to the optimization with simultaneous ESS charging and discharging were suboptimal in each situation studied in this work, meaning that nonconvex ESS models preventing simultaneous charging and discharging are unnecessary under these problem formulations. Additionally, we showed that including terms in the objective function $f_{\rm cost}(\mathbf{\underline{x}}_{N})$ to capture ESS lifetime considerations can improve the non-simultaneous charging and discharging guarantees for the ESS model in the situations studied in this work. If there are times when the cost of electricity $c_e^{(t)}=\$0$/kWh for some time $t$, including terms in the objective function $f_{\rm cost}(\mathbf{\underline{x}}_{N})$ to capture ESS lifetime considerations are necessary to ensure proper ESS dynamics with the proposed ESS model. Simulation results were provided to show the ESS charging and discharging behavior for each of the cases studied in this work.
	
	Future work includes ensuring non-simultaneous charging and discharging in stochastic ESS models or in electric vehicle (EV) models where the EV battery is used for flexible storage. Additionally, this work can be extended to other electricity pricing schemes such as feed-in tariffs. The KKT analysis we applied to a MPC-based HEMS optimization can be expanded to other convex ESS models and renewable energy research settings equipped with an ESS. \vspace{-0.3cm}
	% if have a single appendix:
	%\appendix[Proof of the Zonklar Equations]
	% or
	%\appendix  % for no appendix heading
	% do not use \section anymore after \appendix, only \section*
	% is possibly needed
	
	% use appendices with more than one appendix
	% then use \section to start each appendix
	% you must declare a \section before using any
	% \subsection or using \label (\appendices by itself
	% starts a section numbered zero.)
	%
	% you can choose not to have a title for an appendix
	% if you want by leaving the argument blank
	% use section* for acknowledgment
	%\section*{Acknowledgment}
	%The authors are very thankful for the financial support that enabled this research.
	% Can use something like this to put references on a page
	% by themselves when using endfloat and the captionsoff option.
	\ifCLASSOPTIONcaptionsoff
	\newpage
	\fi
	\bibliographystyle{IEEEtran}
	% argument is your BibTeX string definitions and bibliography database(s)
	\bibliography{OptPowerSysBib}

	\begin{IEEEbiography}[{\includegraphics[width=1in,height=1.25in,clip,keepaspectratio]{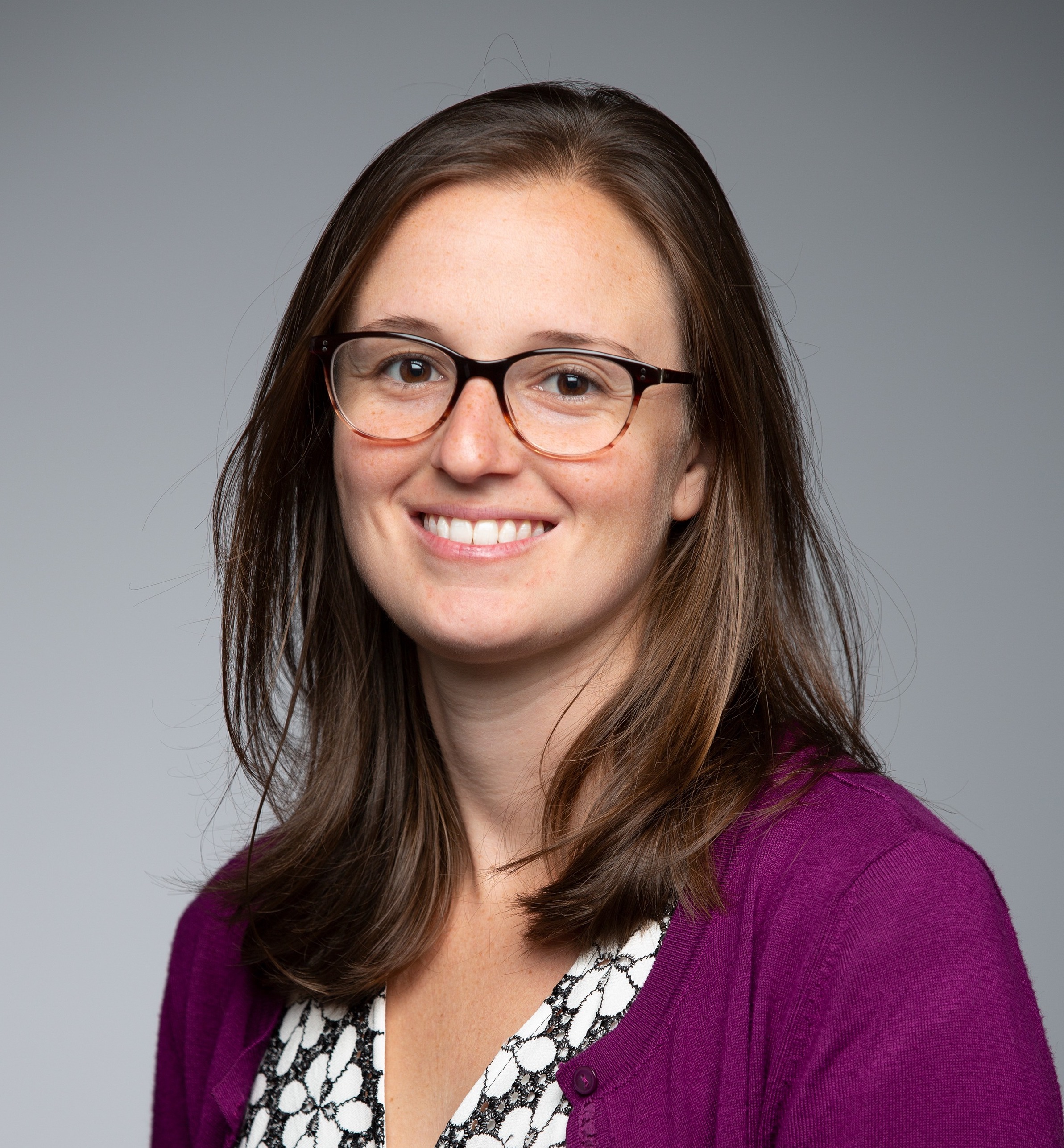}}]{Kaitlyn Garifi} is currently working towards her PhD in Electrical Engineering at University of Colorado Boulder. She received her M.S. in Electrical Engineering from University of Colorado Boulder in 2017, and both a B.S. in Bioengineering and B.A. in Mathematics from University of California, Santa Cruz, in 2015. Her research interests include stochastic optimization, renewable energy integration, and smart grid technologies.
	\end{IEEEbiography}
	\vspace{-1.2cm}
	% if you will not have a photo at all:
	\begin{IEEEbiography}[{\includegraphics[width=1in,height=1.25in,clip,keepaspectratio]{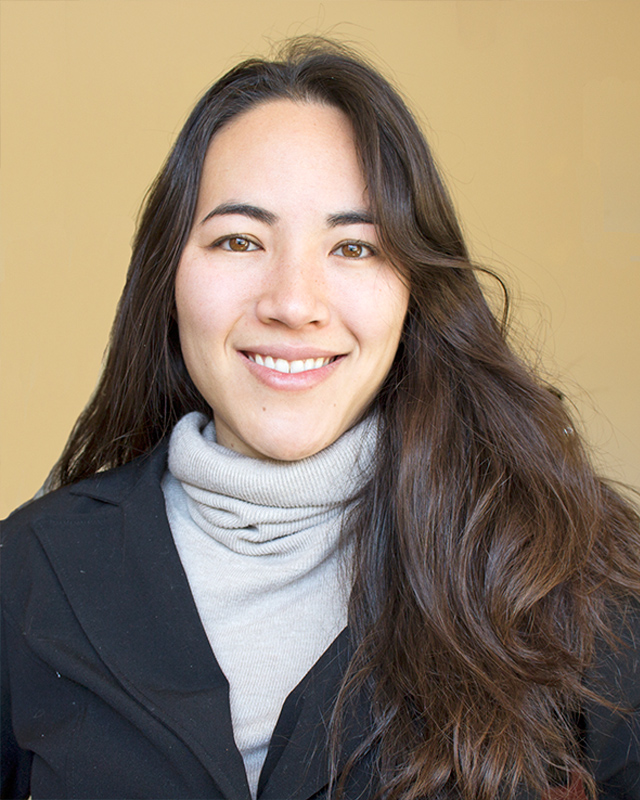}}]{Kyri Baker} received her PhD, M.S., and B.S. in Electrical and Computer Engineering from Carnegie Mellon University in 2014, 2010, and 2009, respectively. She is currently an Assistant Professor in the Civil, Environmental, and Architectural Engineering Department at the University of Colorado, Boulder. She also holds a courtesy appointment in the Department of Electrical, Computer, and Energy Engineering, and a joint appointment at the National Renewable Energy Laboratory (NREL) through the Renewable and Sustainable Energy Institute (RASEI). Prof. Baker's research focuses on optimization and control solutions for future smart grids, from the individual building level to the market level.
	\end{IEEEbiography}
	\vspace{-1.2cm}
	% insert where needed to balance the two columns on the last page with
	% biographies
	%\newpage
	\begin{IEEEbiography}[{\includegraphics[width=1in,height=1.25in,clip,keepaspectratio]{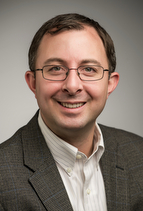}}]{Dane Christensen} is a Senior Engineer in the Buildings and Thermal Systems Engineering Center at the National Renewable Energy Laboratory (NREL) in Golden, CO. His team focuses on smart buildings, home-to-grid integration, and how emerging sensors, analytics and controls can enable cost-effective outcomes for building owners, utilities, manufacturers and society. Research is performed in the Energy Systems Integration Facility on NREL's campus, in collaboration with industry and academia. Dr. Christensen has over 40 technical publications, three issued patents, and numerous provisional patents and software copyrights. Dane received a PhD in Mechanical Engineering from University of California, Berkeley, in 2005 and a B.S. in Mechanical Engineering from Rice University in 2001.	
	\end{IEEEbiography}
	\vspace{-1.2cm}
	\begin{IEEEbiography}[{\includegraphics[width=1in,height=1.25in,clip,keepaspectratio]{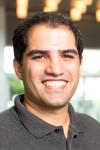}}]{Behrouz Touri} is an Assistant Professor of Electrical and Computer Engineering at the University of California San Diego. Prior to this, he was an Assistant Professor of Electrical, Computer, and Energy Engineering Department at the University of Colorado Boulder. Touri received a Ph.D. from the University of Illinois at Urbana Champaign in 2011. He is a recipient of an AFOSR Young Investigator Award 2016 and the 2018 Donald P. Eckman Award. His research interests include distributed computation and optimization, control over random networks, and game theory and multiagent systems. 
	\end{IEEEbiography}
	% You can push biographies down or up by placing
	% a \vfill before or after them. The appropriate
	% use of \vfill depends on what kind of text is
	% on the last page and whether or not the columns
	% are being equalized.
	
	\vfill
	
	% Can be used to pull up biographies so that the bottom of the last one
	% is flush with the other column.
	%\enlargethispage{-5in}

	% that's all folks
\end{document}